\newtheorem{theorem}{Theorem}[section]
\newtheorem{lemma}[theorem]{Lemma}
\newtheorem{proposition}[theorem]{Proposition}
\newtheorem{corollary}[theorem]{Corollary}
\theoremstyle{definition}
\newtheorem{definition}[theorem]{Definition}
\theoremstyle{remark}
\newtheorem{remark}[theorem]{Remark}
\numberwithin{equation}{section}
\newcounter{saveeqn}
\newcommand{\be}{\begin{equation}}
\newcommand{\ee}{\end{equation}}
\newcommand{\ba}{\begin{array}}
\newcommand{\ea}{\end{array}}
\newcommand{\bea}{\begin{eqnarray*}}
\newcommand{\eea}{\end{eqnarray*}}
\newcommand{\bean}{\begin{eqnarray}}
\newcommand{\eean}{\end{eqnarray}}
\def\B{\mathcal{B}}
\def\R{\mathbb{R}}
\def\C{\mathbb{C}}
\def\N{\mathbb{N}}
\def\h{\mathfrak{h}}
\def\n{\nabla}
\def\a{\alpha}
\def\b{\beta}
\def\d{\delta}
\def\e{\varepsilon}
\def\t{\theta}
\def\p{\partial}
\def\O{\Omega}
\def\ds{\displaystyle}
\title[Stability for Calder\'on's Problem by A Single Measurement]{Stable determination of polygonal inclusions in Calder\'on's Problem by A Single Partial Boundary Measurement}
\author{Hongyu Liu}
\address{Department of Mathematics, Hong Kong Baptist University, Kowloon, Hong Kong SAR, China}
\email{hongyu.liuip@gmail.com, hongyuliu@hkbu.edu.hk}
\author{Chun-Hsiang Tsou}
\address{Department of Mathematics, Hong Kong Baptist University, Kowloon, Hong Kong SAR, China}
\email{c-h\_tsou@hkbu.edu.hk}
\date{} 
\begin{document}
\maketitle

\begin{abstract}

We are concerned with the Calder\'on problem of determining the unknown conductivity of a body from the associated boundary measurement. We establish a logarithmic type stability estimate in terms of the Hausdorff distance in determining the support of a convex polygonal inclusion by a single partial boundary measurement. We also derive the uniqueness result in a more general scenario where the conductivities are piecewise constants supported in a nested polygonal geometry. Our methods in establishing the stability and uniqueness results have a significant technical initiative and a strong potential to apply to other inverse boundary value problems. 

\medskip

\noindent{\bf Keywords:}~~ Calder\'on problem, electrical impedance tomography, polygonal inclusion, logarithmic stability, piecewise conductivities, single partial boundary measurement 

\noindent{\bf 2010 Mathematics Subject Classification:}~~35R30, 35J25, 86A20

\end{abstract}

\section{Introduction}

In this paper, we consider the Calder\'on problem in determining the conductivity of a body from the associated boundary measurements of input-output pairs of electric current and voltage. The problem was first proposed and studied by A. P. Calder\'on in 1980 \cite{CALDERON2006} and has a profound impact on the field of inverse problems for partial differential equations (PDEs). It is also known as the inverse conductivity problem and has many practical applications including the geophysical prospecting and Electrical Impedance Tomography in medical imaging. 

The mathematical setup of the Calder\'on problem is described as follows. Let $\Omega\subset\mathbb{R}^n$, $n\geq 2$, be a bounded Lipschitz domain and $\gamma\in L^\infty(\Omega)$ be a positive function. Consider the following elliptic PDE problem for $u\in H^1(\Omega)$,
\begin{equation}\label{EQ_Calderon}
\begin{cases}
\displaystyle{\mathrm{div}(\gamma\nabla u)=0}\qquad & \mbox{in}\ \ \Omega,\medskip\\
\displaystyle{u=\psi\in H^{1/2}(\partial\Omega)} & \mbox{on}\ \ \partial\Omega. 
\end{cases}
\end{equation} 
Associated with \eqref{EQ_Calderon}, we introduce the following Dirichlet-to-Neumann (DtN) map, $\Lambda_\gamma: H^{1/2}(\partial\Omega)\mapsto H^{-1/2}(\partial\Omega)$, 
\begin{equation}\label{eq:DtN}
\Lambda_\gamma(\psi)=\gamma\partial_\nu u|_{\partial\Omega},
\end{equation}
where $u\in H^1(\Omega)$ is the solution to \eqref{EQ_Calderon} and $\nu\in\mathbb{S}^{n-1}$ is the exterior unit normal vector to $\partial\Omega$. In the physical situation, $\gamma$ signifies the conductivity of the body 
$\Omega$, $\psi$ is the input electric voltage, and $u$ and $\Lambda_\gamma(\psi)$ are respectively the induced electric potential in $\Omega$ and current on $\partial\Omega$. The Calder\'on's inverse problem is to recover $
\gamma$ by knowledge of $\Lambda_\gamma$, namely,
\begin{equation}\label{eq:ip1}
\Lambda_\gamma\rightarrow\gamma. 
\end{equation}
It is remarked that knowing the DtN map $\Lambda_\gamma$ means that the boundary Cauchy data $(\psi, \Lambda_\gamma(\psi))$ are known for any $\psi\in H^{-1/2}(\partial\Omega)$. Hence, if the DtN map is used, then infinite
boundary measurements are employed for the inverse problem. On the other hand, if only a single pair of Cauchy data is used, namely $(\psi, \Lambda_\gamma(\psi))$ for a fixed $\psi\in H^{1/2}(\partial\Omega)$, this is referred to 
as a single boundary measurement. Another scenario of practical importance is the so-called partial data problem. Let $\Gamma_0\Subset\partial\Omega$ be a proper subset of $\partial\Omega$. The Cauchy data $(\psi, \gamma\partial_\nu u|_{\Gamma_0})$, where $\mathrm{supp}(\psi)\subset\Gamma_0$, are referred to as a partial boundary measurement. In this paper, we study the Calderon problem in determining the unknown conductivity 
by a single partial boundary measurement. 

We are mainly concerned with the uniqueness and stability issues for the inverse problem, which are respectively concerned with establishing the results that $\Lambda_{\gamma_1}=\Lambda_{\gamma_2}$ if and only if
$\gamma_1=\gamma_2$, and $\|\gamma_1-\gamma_2\|_1\leq \tau(\|\Lambda_{\gamma_1}-\Lambda_{\gamma_2}\|_2)$, where $\|\cdot\|_j, j=1, 2$, are certain suitable norms and $\tau: \mathbb{R}_+\mapsto \mathbb{R}_+$ is the stability
function satisfying $\lim_{t\rightarrow+0}\tau(t)=0$. Sylvester and Uhlmann \cite{SylvesterUhlmann1987} proved the uniqueness result for $C^2$-smooth conductivity functions in dimension 3 using DtN map. Astala and P{\"a}iv{\"a}rinta \cite{astala2006calderon} generalized the uniqueness result in dimension $2$ for $L^\infty$ conductivity functions. The general stability estimate is firstly proven by Alessendrini \cite{alessandrini1988stable} with a logarithmic type estimation. Nachman \cite{Nachman1988} derived the uniqueness result in
dimension $2$ and a reconstruction algorithm to recover $\gamma$ from the DtN map $\Lambda_\gamma$. The partial-data Calder\'on problem using infinitely many boundary measurements are considered in \cite{CSU} and \cite{IUY}, respectively in three and two dimensions.

In many physical and engineering applications such as detecting a default in a composite medium, the conductivity coefficient is usually modelled as a piecewise constant function instead of a generic variable function. The inverse conductivity problem in such a special case is referred to as the inverse inclusion problem. The main challenge then becomes recovering the geometrical shape of an inclusion $D \Subset \O$. In the simplest case, the conductivity function $\gamma$ is usually assumed to take the following form 
\begin{equation}\label{eq:form}
\gamma=1+(k-1)\chi_D\quad\mbox{in}\ \ \Omega,
\end{equation}
where $k\in\mathbb{R}_+$ and $\chi_D$ signifies the characteristic function of the inclusion $D$. For instance, Ammari and Kang \cite{ammari2004reconstruction} use the generalized polarization tensors deduced from layer potential techniques to extract the geometrical characterizations of inclusions. Another recent progress is the recovery using the so-called multifrequency measurements in \cite{ammari2017identification,moi2,moi3}, where it is assumed that the conductivity inside the inclusion depends on the frequency of the time-dependent input current that follows form a physical model. In \cite{AMR}, by using infinitely many partial boundary measurements, the uniqueness is established in determining (possibly anisotropic) piecewise-constant conductivities. 

From a practical viewpoint, it is rather unrealistic to make use of infinitely many measurements. Hence, it is of practical significance weather a single boundary measurement is sufficient to recover the inverse inclusion problem. 
There are some existing results in the literature. Fabes, Kang and Seo \cite{fabes1999inverse} proved the H\"{o}lder type stability estimate when $D$ is a disc in $\R^2$. Triki and Tsou \cite{moi1} improved slightly this stability estimate and proposed a numerical scheme to recover the target disk under a single measurement. In the case that $D$ is a ball in $\R^3$, the uniqueness is obtained by Kang and Seo \cite{Kang_ball_R3}. {In the case that $D$ is a convex polygon in $\R^2$ or a convex polyhedron in $\R^3$, the uniqueness results are respectively given by Friedman, Isakov \cite{friedman1989uniqueness} and by Barcel\'{o}, Fabes and Seo \cite{barcelo1994inverse}. The reconstruction of an insulating curvilinear polygonal inclusion was considered in \cite{CHL}. On the other hand, Beretta, Francini and Vesella \cite{beretta2019lipschitz} proved recently the Lipschitz stability estimate using DtN map, namely infinitely many measurements, in the case of polygonal inclusions. We emphasise that in the aforementioned studies by a single measurement, it is a technical requirement that the content of the inclusion has to be uniform; that is, the conductivity of the inclusion is a positive constant; see Remark~\ref{rem:2} in what follows for more relevant discussion. To our best knowledge, there are lots of rooms to investigate on the inverse inclusion problem by a single measurement; for example, even for the case that the inclusion is supported in an ellipse, the uniqueness is still open.}

In this paper, we make a significant progress towards solving the inverse inclusion by a single measurement. We establish a logarithmic type stability estimate in determining the support of a convex polygonal inclusion by a single partial boundary measurement. We also derive the uniqueness result in a much more general scenario where the conductivities could be piecewise constants supported in a nested polygonal geometry. Indeed, both the uniqueness and stability results can be extended to cover more general scenarios; see Remarks~\ref{rem:1}--\ref{rem:3} for more relevant discussion. Our methods in establishing the stability and uniqueness results are inspired by a recent paper \cite{Liu_stability_polygon_helmholtz}, where the stability estimate was established in determining the convex polyhedral support of a medium function by a single acoustic far-field measurement. The mathematical argument in \cite{Liu_stability_polygon_helmholtz} was extended and developed from that in \cite{blaasten2014corners} for acoustic scattering from corner singularities. The key idea is to characterize the singularities of solutions in the phase space by making use of tools from microlocal analysis. In \cite{blaasten2014corners,Liu_stability_polygon_helmholtz}, the medium function appears in the lower-order term of the governing PDE and one has $H^2$-regularity of the underlying solution, which plays a critical role in the arguments therein. However, in the current setup for the Calder\'on problem, the conductivity appears in the leading-order term of the governing PDE and one has at most $H^1$-regularity of the solution. This difference brings significant challenges to our study.  Indeed, the gradient of the solution may blow up at near the corner. To overcome this difficulty, we make use of the decomposition of solutions to elliptic equations in polygonal domains \cite{Grisvard,Kozlov}. The introduction of this new ingredient together with a delicate balancing of a microlocal-type argument enables us to establish the desired uniqueness and stability results in $\mathbb{R}^2$.  We believe that our methods have a significant technical initiative and a strong potential to apply to other inverse boundary value problems including the three-dimensional extension and the case with anisotropic conductivities. 

The rest of the paper is organized as follows. In Section 2, we present and discuss the main stability result. Sections 3, 4 and 5 are devoted to the proof of the stability estimate. 
In Section \ref{Propagation}, we prove the propagation of smallness from the boundary to the inclusion. In Section \ref{Integral}, we introduce the integral identity and the construction of a complex geometrical solution, then use both them to prove the estimations on the solutions $u$, $u'$ respectively to the unperturbed conductivity equation and the perturbed one. We then prove Theorem \ref{main-theorem} in Section \ref{PROOF}. Finally, in Section 6, we present and prove the uniqueness result in a more general scenario.

\section{Statement of the main stability result}\label{MR}

Let $\Omega\subset\mathbb{R}^2$ be a bounded simply-connected domain with a Lipschitz boundary $\partial\Omega$ and let $D\Subset\Omega$ be a convex polygon. Consider a conductive inclusion of the form \eqref{eq:form} and the following conductivity equation for $u\in H^1(\Omega)$
\begin{equation}\label{EQ}
\begin{cases}
\displaystyle{\mathrm{div}\big((1+(k-1)\chi_D)\nabla u\big)=0}\quad &\mbox{in}\ \ \Omega,\medskip\\
\partial_\nu u=g\in H^{-1/2}(\partial\Omega)\quad&\mbox{on}\ \ \partial\Omega,\medskip\\
\displaystyle{\int_{\partial\Omega} u\, ds=0}. 
\end{cases}
\end{equation}
It is noted for the subsequent use that \eqref{EQ} is equivalent to the following system
\begin{equation}\label{EQ2}
\begin{cases}
\Delta u=0\quad &\mbox{in}\ \ \Omega\backslash\partial D,\medskip\\
u|_{\partial D}^+=u|_{\partial D}^-\quad &\mbox{on}\ \ \partial D,\medskip\\
\partial_\nu u|_{\partial D}^+=k\partial_\nu|_{\partial D}^-&\mbox{on}\ \ \partial D,\medskip\\
\partial_\nu u=g\in H^{-1/2}(\partial\Omega)\quad&\mbox{on}\ \ \partial\Omega,\medskip\\
\displaystyle{\int_{\partial\Omega} u\, ds=0},
\end{cases}
\end{equation}
where $\pm$ signify the limits taking from the outside and inside of $D$. The equations on $\p D$ in \eqref{EQ2} are called the jump relations. 

Next, we introduce some technical assumptions for our stability study. 
\begin{definition}\label{def:class}
Let $(D, k)$ be a conductive inclusion in $\mathbb{R}^2$ of the form \eqref{eq:form} and it is said to belong to the class $\mathcal{D}$ if the following conditions are fulfilled:
\begin{enumerate}
\item $D$ is a convex polygon and $k\in\mathbb{R}_+$ with $k\neq 1$ and $k\in (k_m, k_M)$, where $k_m$ and $k_M$ are two positive constants;
\item There exist $0<a_m<a_M<\pi$ such that the opening of the angle at each vertex of $D$ is in $(a_m,a_M)$;
\item The length of each edge of $D$ is at least $l>0$;
\item The distance of $D$ to the boundary of $\Omega$, namely $\mathrm{dist}(D,\p\O)$, is at least $\d_0>0$;
\item For any $D,D' \in \mathcal{D}$, the convex hull of $D\cup D'$ also has a distance at least $\d_0$ to the boundary $\p \O$.
\end{enumerate}
\end{definition}

The parameters $(a_m, a_M, l, \d_0)$ provide geometric characterizations of the polygonal inclusion. As a standard scenario in the stability estimate, the stability constant in our subsequent study shall depend on those geometric parameters. The next technical condition is about the input current $g$ on $\partial\Omega$. {It is required that the input $g\in H^{-1/2}(\partial\Omega)$ is such chosen that the induced electric potential in $\Omega$ satisfies for any vertex $x_c$ of $D$,
\begin{equation}\label{eq:condd1} 
\lim_{r\rightarrow +0}\displaystyle{ \frac{\int_{B_r(x_c)} |\nabla u|}{|B_r(x_c)|} }>0,
\end{equation}
where and also in what follows $B_r(x_c)$ signifies the disk centred at $x_c$ with radius $r\in\mathbb{R}_+$. In such a case, $g$ is referred to as admissible. Basically, \eqref{eq:condd1} means that $\nabla u$ is not vanishing at the vertex point. It is emphasized that the admissibility of boundary inputs for the conductivity problem has been studied in a different context \cite{gradu1,gradu2}. It is unobjectionable for us to assume throughout the rest of our study that the boundary input $g$ is always admissible. Let $\Gamma_0$ be an open and nonempty subset of $\partial\Omega$. $\Gamma_0$ signifies the measurement curve, that is $u|_{\Gamma_0}$ is the output for the inverse conductivity problem in our subsequent study. We would like to point out that it may occur that an admissible input $g$ satisfies $\mathrm{supp}(g)\subset\Gamma_0$. Hence, we actually consider a single partial boundary measurement in our study as long as the admissibility requirement is fulfilled. In what follows, $(a_m, a_M, l, \d_0), (k_m, k_M), g$ and $\Gamma_0$ are referred to as the a-priori parameters. It shall not be surprising to see that the stability constant in our quantitative estimate depends on the a-priori parameters. On the other hand, it is remarked at this point that in the qualitative uniqueness study in Section 6, only the admissibility condition is required. 
}

We are in a position to present our main stability result for the Calder\'on problem in determining polygonal inclusions. In what follows, for $D, D'\in\mathcal{D}$, we define
\[
d_\mathcal{H}(D, D')=\max \big(\sup_{x\in D}\mathrm{dist}(x,D'),\sup_{x'\in D'}\mathrm{dist}(x',D)\big),
\]
to be Hausdorff distance between $D$ and $D'$.  

\begin{theorem}\label{main-theorem}
Let $(D, k)$ and $(D',k')$ be two polygonal inclusions from the class $\mathcal{D}$. Let $u$ and $u'$ be the solutions to \eqref{EQ} associated with $(D, k)$ and $(D',k')$, respectively. Suppose that there holds
\begin{equation}\label{small}
\Vert u-u' \Vert_{H^{1/2}(\Gamma_0)} \leq \e.
\end{equation}
Then there exist constants $C,\b>0$ which depend only on the a-priori parameters such that when $\varepsilon\in\mathbb{R}_+$ is sufficiently small,
\begin{equation}\label{stability}
d_\mathcal{H}(D, D') \leq C (\ln|\ln\e|)^{-\b}.
\end{equation}
\end{theorem}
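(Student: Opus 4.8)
The plan is to prove the contrapositive by establishing a quantitative unique continuation principle that links the boundary smallness \eqref{small} to a geometric closeness of the two polygons. Suppose, for contradiction, that $d_\mathcal{H}(D,D')$ is \emph{not} small, so that there exists a vertex $x_c$ of $D$ (say) that lies at a definite distance from $D'$; by the geometric constraints in Definition~\ref{def:class}, near such a vertex the corner of $D$ is exposed, meaning a small sector at $x_c$ lies entirely in $\Omega\setminus D'$ where $u'$ is harmonic while $u$ must exhibit the singular behaviour forced by the jump relations in \eqref{EQ2}. The overall strategy is therefore a three-step chain: first propagate the smallness of $u-u'$ from the measurement curve $\Gamma_0$ into a neighborhood of the corner; second, use the corner singularity analysis to show that the difference $u-u'$ \emph{cannot} be too small near an exposed corner unless the corner is in fact shared; and third, quantify the conflict between these two bounds to extract the logarithmic modulus $(\ln|\ln\e|)^{-\b}$.

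The first step is the propagation of smallness, which I would carry out via the quantitative unique continuation / three-spheres inequality for harmonic (and piecewise-harmonic) functions, as flagged in Section~\ref{Propagation} of the paper. Since both $u$ and $u'$ solve the conductivity equation and agree in $H^{1/2}(\Gamma_0)$ up to $\e$, and since $\Gamma_0$ is a nonempty open arc at distance at least $\d_0$ from the inclusions, a standard iterated three-balls argument propagates this smallness from $\Gamma_0$ through the harmonic region $\Omega\setminus(D\cup D')$ up to any fixed ball $B_r(x_c)$ near the corner. The output will be an estimate of the form $\|u-u'\|_{H^1(B_r(x_c)\cap\Omega_{\mathrm{out}})}\le C\,\e^{\mu}$ for some $\mu\in(0,1)$ depending on the a-priori parameters, possibly after chaining through the convex hull of $D\cup D'$ (this is where hypothesis~(5) in Definition~\ref{def:class} is used to guarantee a harmonic corridor away from $\partial\Omega$).

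The second step, and the technical heart, is to show a \emph{lower} bound on $u-u'$ dictated by the corner. This is where I would deploy the integral identity and complex geometrical optics (CGO) construction announced for Section~\ref{Integral}. The idea is to test the weak formulation of the difference equation against a carefully chosen CGO-type solution $e^{\zeta\cdot x}$ (with $\zeta\in\mathbb{C}^2$, $\zeta\cdot\zeta=0$) concentrated at the exposed corner $x_c$, exploiting the fact that such exponentials decouple the contributions of the two media and pick out the local behaviour at the corner. Because $u'$ is harmonic in the exposed sector while $u$ carries a genuine gradient singularity there — a singularity whose non-degeneracy is exactly guaranteed by the admissibility condition \eqref{eq:condd1} on $g$ — the resulting integral has a definite, non-vanishing leading contribution. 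The subtlety in this step is that $u$ and $u'$ have at most $H^1$-regularity and $\nabla u$ may blow up at $x_c$; to control this I would invoke the Grisvard--Kozlov decomposition \cite{Grisvard,Kozlov} of solutions in polygonal domains, splitting $u$ into a regular $H^2$ part plus an explicit singular part of the form $r^{\pi/\omega}$ (with $\omega$ the opening angle), and carefully estimating each against the CGO exponential. Balancing the exponential decay of the CGO solution against the algebraic corner singularity yields a lower bound of the form $\|u-u'\|\gtrsim \exp(-C/d_\mathcal{H})$ (schematically), reflecting that a well-separated corner forces a non-trivial discrepancy.

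The final step is to combine the two bounds: the upper bound $\e^{\mu}$ from propagation of smallness and the corner-driven lower bound, both over the same neighborhood $B_r(x_c)$. Setting them against each other produces an inequality in which $d_\mathcal{H}(D,D')$ is trapped between the double-logarithm of $\e$, and solving for $d_\mathcal{H}$ gives the stated estimate $d_\mathcal{H}(D,D')\le C(\ln|\ln\e|)^{-\b}$; the double logarithm is the expected cost of converting a microlocal/CGO smallness statement (itself logarithmic, as in Alessandrini-type bounds) through a second unique-continuation layer. I expect the main obstacle to be precisely the low regularity issue in the second step: unlike the Helmholtz setting of \cite{Liu_stability_polygon_helmholtz,blaasten2014corners} where the medium sits in a lower-order term and one enjoys $H^2$-regularity, here the conductivity is in the principal part, the gradient is singular at the corner, and the CGO/integral-identity estimates must be reconciled with only $H^1$ control. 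The Grisvard--Kozlov decomposition is the key device to tame this, and getting the corner singularity exponent, the CGO parameter $|\zeta|$, and the propagation rate $\mu$ to balance quantitatively — uniformly over the class $\mathcal{D}$ — is the delicate bookkeeping I would need to execute with care.
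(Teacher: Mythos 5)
Your overall architecture coincides with the paper's: propagation of smallness from $\Gamma_0$ through $\O\setminus Q$ (with $Q$ the convex hull of $D\cup D'$) to an exposed corner, a CGO-tested integral identity at that corner, and the Grisvard--Kozlov decomposition to compensate for the missing $H^2$-regularity; you also correctly identify the admissibility condition \eqref{eq:condd1} as the source of non-degeneracy of the singular coefficient. So the plan is the right one in outline.

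There is, however, a genuine quantitative gap in your first step, and it sits exactly where the double logarithm is born. You claim a uniform bound $\Vert u-u'\Vert_{H^1(B_r(x_c)\cap\O_{\mathrm{out}})}\le C\e^{\mu}$ with a fixed H\"older exponent $\mu$. Such a bound is only available at a fixed positive distance from $\p Q$: the three-spheres chain must use balls of radius comparable to $\mathrm{dist}(\cdot,\p Q)$, so the exponent in $\e^{\tilde{\a}^{|\p\O'|/r+1}}$ collapses to $0$ as that distance shrinks. But the estimate is needed precisely at points close to $\p Q$: the exposed sector at $x_c$ has radius only $h\sim\min(\h,l,\d_0)$, where $\h=d_\mathcal{H}(D,D')$ is the very quantity being bounded; the CGO parameter is then forced to satisfy $\tau\gtrsim h^{-1}$, and the exterior contour $\p S^e$ on which $|u_0|$ stays bounded must lie at distance $\sim\tau^{-1}$ from $\p Q$, which tends to $0$ as $\e\to 0$. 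The paper resolves this by letting the chain radius $\tilde r=\tilde r(\e)$ shrink with $\e$ and interpolating against the H\"older continuity (inherited from the corner decomposition) of the weighted functions $x\mapsto|x-x_c|^P w(x)$, which produces the weighted bound $|w(x)|\le C_0 T\,\mathrm{dist}(x,\p Q')^{-P}(\ln|\ln\e|)^{-\a}$ of Proposition \ref{propagation_small}. The double logarithm therefore originates in this degeneration of unique continuation near the corner of the convex hull, not, as you suggest, from composing an Alessandrini-type logarithmic CGO estimate with a second unique-continuation layer; without this weighted, $\e$-dependent version of the propagation step, the final balancing of $\tau^{-\eta}$ against the error terms cannot be closed. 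A smaller inaccuracy: the singular exponent is not $\pi/\omega$ but the transmission-problem exponent $\eta=\eta(k,\omega)\in(0,1)$, which is why the a-priori bounds $k\in(k_m,k_M)$ and the angle constraints are needed to keep $\eta\in[\eta_m,\eta_M]\subset(0,1)$ uniformly over the class $\mathcal{D}$.
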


Three remarks of Theorem~\ref{main-theorem} are in order. 

\begin{remark}\label{rem:1}
The stability estimate in Theorem~\ref{main-theorem} in determining the support of the inclusion is clearly independent of its content. By taking $\varepsilon\rightarrow +0$, one readily has the uniqueness result; that is, if $u|_{\Gamma_0}=u'|_{\Gamma_0}$, then there holds $D=D'$. Furthermore, in Section 6, we shall show that there also holds $k=k'$. In fact, in Section 6, we show that the uniqueness holds in a much more general scenario with piecewise constant conductivities supported in a nested polygonal geometry.  
\end{remark}

\begin{remark}\label{rem:2}
The argument in proving Theorem~\ref{main-theorem} is of a ``localized" nature, which centers around a corner on $D\Delta D'=(D\backslash\overline{D'})\cup (D'\backslash\overline{D})$. Hence, there are two generalizations of Theorem~\ref{main-theorem} that can be made. The first one is that instead of requiring the conductivity in $D$ is a constant function, it is sufficient to require that the conductivity function is constant around each vertex of $D$ (may even take different values at different vertices), and it may be a variable function in the rest part of the inclusion. This is in sharp difference from the existing results in the literature as discussed in Section 1, where the content of the inclusion has to be uniform. The second one is that there might be multiple inclusions presented within the body $\Omega$; that is, the target inclusion is of the following form
\[
(D, k)=\bigcup_{j=1}^N (D_j, k_j),
\]
where each $(D_j, k_j)$ is a polygonal inclusion of the class $\mathcal{D}$. It is required that $D_j$, $j=1,2,\ldots N$ are pairwise disjoint and sparsely distributed. In principle, one can show those generalizations by a line-to-line copy of the proof of Theorem~\ref{main-theorem}. However, in order to have a rigorous and precise study, one still needs to derive the detailed geometrical and topological characterizations of the inclusions in these scenarios, which might be a bit tedious. In order to have a concise and clear exposition of the main idea of our study, we only consider the case in Theorem~\ref{main-theorem}. Nevertheless, the aforementioned two generalizations should be clear in the context of our study.  
\end{remark}

\begin{remark}\label{rem:3}
The Calder\'on problem is known to be severely ill-conditioned. Since we only make use of a single partial boundary measurement, the logarithmic-type estimate in \eqref{stability} is arguably optimal. It is worth further investigation on how the stability can be improved when more measurement data are available. 
\end{remark}

%

\section{Propagation of smallness from the boundary to the inclusion}\label{Propagation}
{
Starting from this section till to Section 5, we give the proof of Theorem~\ref{main-theorem}. In this section, we consider the propagation of smallness in \eqref{small} from the boundary to the inclusion. The main goal of this section is to estimate punctually $u-u'$ and $\n (u-u')$ in a neighborhood of a vertex $x_c$. The principal tool in this section is the standard procedure in \cite{Alessandrini} to establish the propagation of smallness in the studies of Cauchy problems. We will derive here similar estimations in $L^\infty$ norms and generalises them into the functions with a H\"older type singularity near $x_c$. As the price, our estimate of smallness depends on the distance to the convex polytope $Q'$ that we shall introduce later.}

To begin with, we introduce the three sphere inequality for harmonic functions in $L^{\infty}$ norms. The general three sphere inequality for any elliptic system is given by Alessandrini \cite{Alessandrini}, and a more precise estimate for harmonic functions is obtained by Korevaar and Meyers \cite{Meyers}. In what follows, for notational convenience, we set $B_R:=B_R(0)$. 

\begin{lemma}[Three-sphere Inequality, Meyers, 1994]
Let $0<r_1<r_2<r_3<R$, and $w\in H^1_{loc}(B_R)$ be a harmonic function in $B_R$. Then there exists $\tilde{\a}\in (0, 1)$, which depends only on ${r_2}/{r_1}$ and ${r_3}/{r_2}$, such that
\begin{equation}\label{3spheres}
\Vert w \Vert_{L^\infty(B_{r_2})} \leq \Vert w \Vert^{1-\tilde{\a}}_{L^\infty(B_{r_3})}\Vert w \Vert^{\tilde{\a}}_{L^\infty(B_{r_1})}.
\end{equation}
\end{lemma}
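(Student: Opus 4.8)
The plan is to deduce \eqref{3spheres} from the logarithmic convexity of the supremum norm $M(\rho):=\Vert w\Vert_{L^\infty(B_\rho)}$ viewed as a function of $\log\rho$, which is the substance of the Korevaar--Meyers theorem. Granting that $t\mapsto\log M(e^t)$ is convex on the relevant interval, the three-sphere inequality is exactly the defining two-point convex-combination estimate. Writing $t_i=\log r_i$ and noting $t_1<t_2<t_3$, set
\[
\tilde\a=\frac{t_3-t_2}{t_3-t_1}=\frac{\log(r_3/r_2)}{\log(r_3/r_2)+\log(r_2/r_1)}\in(0,1),
\]
so that $t_2=(1-\tilde\a)t_3+\tilde\a t_1$; convexity then gives $\log M(r_2)\le (1-\tilde\a)\log M(r_3)+\tilde\a\log M(r_1)$, and exponentiating yields \eqref{3spheres}. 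Since $\tilde\a$ is built solely from $r_3/r_2$ and $r_2/r_1$, it has exactly the stated dependence, and the maximum principle (so that $\Vert w\Vert_{L^\infty(B_\rho)}=\Vert w\Vert_{L^\infty(\partial B_\rho)}$, as $|w|$ is subharmonic) lets one pass freely between balls and spheres.

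It remains to establish the log-convexity, and here I would first prove the $L^2$ version on spheres. Expanding the harmonic function $w$ in the ball in solid spherical harmonics, $w(\rho\omega)=\sum_{m\ge 0}\rho^m Y_m(\omega)$ with $Y_m\in L^2(\mathbb{S}^{n-1})$ mutually orthogonal, gives
\[
\int_{\partial B_\rho}|w|^2\,ds=\rho^{n-1}\sum_{m\ge 0}\rho^{2m}\,\Vert Y_m\Vert_{L^2(\mathbb{S}^{n-1})}^2 .
\]
Each summand $\rho^{\,n-1+2m}$ is log-linear, hence log-convex, in $\log\rho$; a sum of nonnegative log-convex functions is log-convex, so $\rho\mapsto\int_{\partial B_\rho}|w|^2\,ds$ is log-convex in $\log\rho$. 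This already yields a three-sphere inequality in the $L^2(\partial B_\rho)$ norm with exponent depending only on the ratios.

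Finally I would upgrade from $L^2$ to $L^\infty$. Since $|w|$ is subharmonic, the sub-mean-value property bounds $\Vert w\Vert_{L^\infty(\partial B_\rho)}$ on a given sphere by an $L^2$ average over a slightly larger concentric shell, while trivially the $L^2$ norm on a sphere is dominated by its $L^\infty$ norm times the surface measure. Inserting a fixed finite chain of intermediate radii between $r_1$, $r_2$ and $r_3$ and chaining these two-sided comparisons with the $L^2$ log-convexity produces \eqref{3spheres}; because only finitely many fixed ratios enter, the resulting exponent $\tilde\a$ still depends solely on $r_2/r_1$ and $r_3/r_2$.

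The main obstacle is that, unlike Hadamard's three-circle theorem for holomorphic functions, the sup norm of a harmonic function does not admit a direct proof by comparison with a single power $\rho^\lambda$: the homogeneous harmonic comparison functions $\rho^\lambda\cos(\lambda\t)$ and $\rho^\lambda\sin(\lambda\t)$ are angularly multivalued for noninteger $\lambda$ and so are unavailable on the full annulus. This is what forces the detour through the $L^2$ expansion, and the delicate point is controlling the constants in the $L^2$-to-$L^\infty$ passage so that the final exponent retains its clean dependence on the radius ratios alone; an alternative, equally viable route replaces the expansion step by the monotonicity of the Almgren frequency function, which likewise delivers the $L^2$ log-convexity.
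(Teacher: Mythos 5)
Your overall strategy---the spherical-harmonic expansion giving exact log-convexity of $\rho\mapsto\int_{\partial B_\rho}|w|^2\,ds$ in $\log\rho$, followed by an $L^2$-to-$L^\infty$ upgrade---is sound up to, but not including, the last step. (For calibration: the paper does not prove this lemma at all; it cites Korevaar--Meyers for the sharp $L^\infty$ statement and Alessandrini et al.\ for the general elliptic version.) The genuine gap is in the upgrade. The sub-mean-value bound and the trivial bound $\Vert w\Vert_{L^2(\partial B_\rho)}\le |\partial B_\rho|^{1/2}\Vert w\Vert_{L^\infty(\partial B_\rho)}$ each carry multiplicative constants, so chaining them with the $L^2$ three-sphere inequality yields
\[
\Vert w \Vert_{L^\infty(B_{r_2})} \leq C\,\Vert w \Vert^{1-\tilde{\a}}_{L^\infty(B_{r_3})}\Vert w \Vert^{\tilde{\a}}_{L^\infty(B_{r_1})}
\]
with some $C=C(n,r_2/r_1,r_3/r_2)\ge 1$, not the constant-free estimate \eqref{3spheres}. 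You identify the ``delicate point'' as keeping the exponent dependent only on the ratios, but the exponent is not the problem; the constant is, and it cannot be traded away by weakening the exponent. Normalize $\Vert w\Vert_{L^\infty(B_{r_3})}=1$ and write $M_j=\Vert w\Vert_{L^\infty(B_{r_j})}$: in the regime $M_1\ge C^{-1/\tilde{\a}}$ your inequality degenerates to the trivial $M_2\le 1$, whereas \eqref{3spheres} asserts the quantitative gap $M_2\le M_1^{\tilde{\a}}<1$. Ruling out harmonic functions with $M_1$ bounded away from $1$ while $M_2$ is arbitrarily close to $1$ is precisely the logarithmic convexity of the supremum norm, i.e.\ the content of the Korevaar--Meyers theorem itself, which your argument never establishes and which requires a genuinely different proof than the $L^2$ expansion.

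That said, the defect is localized. The $L^2(\partial B_\rho)$ log-convexity you derive is correct and constant-free (each term $c_m\rho^{n-1+2m}$ is log-affine in $\log\rho$, and a sum of log-convex functions is log-convex), and the $C$-dependent version of \eqref{3spheres} that your argument does deliver would in fact suffice for everything this paper does with the lemma: in Lemma \ref{chain3spheres} the constant accumulates along the chain of disks only to a factor $C^{1+\tilde{\a}+\cdots}\le C^{1/(1-\tilde{\a})}$, which can be absorbed into the prefactor $T$, and Proposition \ref{propagation} is unaffected beyond renaming constants. So either cite Korevaar--Meyers for the sharp form, or restate the lemma with a multiplicative constant and track it through the chaining; what you cannot do is claim \eqref{3spheres} as written from the $L^2$ detour.
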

Starting from now on, for $r\in\mathbb{R}_+$, we set $r_1=r$, $r_2=2r$, $r_3=4r$ and consider $\tilde{\a}$ as a fixed value between $0$ and $1$.

\begin{lemma}\label{chain3spheres}
Let $U \subset \R^2$ be a bounded connected domain, and $\gamma \subset U$ be a rectifiable curve which links two distinct points $x,y \in U$ such that $B(\gamma,4r):=\cup_{x'\in \gamma}B_{4r}(x')\subset U$. Let $w \in L^\infty(U)$ be harmonic in $U$ and such that $\Vert w \Vert_{L^\infty(B_{4r}(x))}\leq 1$ and $\Vert w \Vert_{L^\infty(U)}=T\geq 1$.\\
Then there holds
\begin{equation}
\Vert w \Vert_{L^\infty(B_r(y))} \leq T \Vert w \Vert^{\tilde{\a}^{d_\gamma/r+1}}_{L^\infty(B_r(x))},
\end{equation}
where $d_\gamma$ is the length of the curve $\gamma$.
\end{lemma}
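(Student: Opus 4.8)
The plan is to propagate the bound from $B_r(x)$ to $B_r(y)$ by iterating the three-sphere inequality \eqref{3spheres} along a chain of overlapping balls whose centres march along $\gamma$. First I would discretise the curve: parametrising $\gamma$ by arc length and placing centres $x=x_0,x_1,\dots,x_N=y$ at arc-length spacing at most $r$, one obtains consecutive centres with $|x_{i+1}-x_i|\le r$ and a total count $N\le d_\gamma/r+1$. Because each $x_i\in\gamma$ and $B(\gamma,4r)\subset U$, every ball $B_{4r}(x_i)$ lies in $U$, so the three-sphere inequality is applicable at each centre and $\|w\|_{L^\infty(B_{4r}(x_i))}\le \|w\|_{L^\infty(U)}=T$.

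The core is a one-step estimate. Applying \eqref{3spheres} at $x_i$ with $r_1=r$, $r_2=2r$, $r_3=4r$, and using $B_r(x_{i+1})\subset B_{2r}(x_i)$ (valid since $|x_{i+1}-x_i|\le r$) together with $\|w\|_{L^\infty(B_{4r}(x_i))}\le T$, I would obtain, writing $m_i:=\|w\|_{L^\infty(B_r(x_i))}$,
\[
m_{i+1}\le T^{1-\tilde{\a}}\,m_i^{\tilde{\a}}.
\]
Iterating this recursion from $i=0$ to $N$ and collecting the powers of $T$ as a geometric series gives
\[
m_N\le T^{(1-\tilde{\a})(1+\tilde{\a}+\cdots+\tilde{\a}^{N-1})}\,m_0^{\tilde{\a}^N}
= T^{1-\tilde{\a}^N}\,m_0^{\tilde{\a}^N}.
\]

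To reach the stated form I would invoke the two normalisations. Since $T\ge 1$ and $1-\tilde{\a}^N\le 1$, the prefactor obeys $T^{1-\tilde{\a}^N}\le T$. Since $B_r(x)\subset B_{4r}(x)$, the hypothesis yields $m_0=\|w\|_{L^\infty(B_r(x))}\le 1$; moreover $\tilde{\a}\in(0,1)$ and $N\le d_\gamma/r+1$ give $\tilde{\a}^N\ge \tilde{\a}^{d_\gamma/r+1}$. As $m_0\le 1$, passing from the exponent $\tilde{\a}^N$ to the smaller exponent $\tilde{\a}^{d_\gamma/r+1}$ can only enlarge the power, so $m_0^{\tilde{\a}^N}\le m_0^{\tilde{\a}^{d_\gamma/r+1}}$. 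Combining these bounds produces exactly $m_N\le T\,\|w\|_{L^\infty(B_r(x))}^{\tilde{\a}^{d_\gamma/r+1}}$.

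The routine points are the arc-length discretisation and the geometric-sum bookkeeping; the one place demanding care is the direction of the monotonicity inequalities in the final step, where one must exploit $T\ge1$ for the prefactor but $m_0\le1$ (so that the smaller exponent enlarges the power) in order to replace the unknown count $N$ by the explicit bound $d_\gamma/r+1$. No genuine obstacle arises beyond verifying that the whole chain remains inside $U$, which is guaranteed by the hypothesis $B(\gamma,4r)\subset U$.
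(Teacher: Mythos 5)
Your proposal is correct and follows essentially the same argument as the paper: a chain of $N\leq d_\gamma/r+1$ disks centred on $\gamma$ with arc-length spacing at most $r$, the one-step three-sphere estimate $m_{i+1}\leq T^{1-\tilde{\a}}m_i^{\tilde{\a}}$, and the geometric-series bookkeeping. Your final step is in fact slightly more explicit than the paper's, since you spell out that the hypothesis $\Vert w\Vert_{L^\infty(B_{4r}(x))}\leq 1$ is what permits replacing the exponent $\tilde{\a}^N$ by the smaller $\tilde{\a}^{d_\gamma/r+1}$.
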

\begin{proof}
We construct a sequence of disks, each of radius $r$ and centred respectively at $x=x_1,x_2, \cdots, x_N,x_{N+1}=y$. Here $N=\lceil d_\gamma /r\rceil$. With this number of disks, it is possible to locate the centres $x_k \in \gamma$ such that $|x_{k+1}-x_k|\leq d_\gamma(x_k,x_{k+1})\leq r$. Here and also in what follows, $d_\gamma(x_k,x_{k+1})$ signifies the along the curve $\gamma$ between the two points $x_k$ and $x_{k+1}$. The latter inequality shows that $B_r(x_{k+1})\subset B_{2r}(x_k)$. It follows from the three-sphere inequality that
\[\Vert w \Vert_{L^\infty(B_r(x_{k+1}))} \leq T^{1-\tilde{\a}}\Vert w \Vert^{\tilde{\a}}_{L^\infty(B_r(x_k))}.\]
We apply successively the above inequality for each $k$, we thus have 
\[\Vert w \Vert_{L^\infty(B_r(y))}\leq T^{(1-\tilde{\a})(1+\tilde{\a}+\cdots+\tilde{\a}^{N-1})}\Vert w \Vert^{\tilde{\a}^N}_{L^\infty(B_r(x))}.\]
Using the facts that $1+\tilde{\a}+\cdots+\tilde{\a}^N \leq \frac{1}{1-\tilde{\a}}$, $T\geq 1$ and $N=\lceil d_\gamma /r\rceil\leq d_\gamma/r+1$, the claim follows.

The proof is complete. 
\end{proof}
{We state now the propagation of smallness in the interior. We introduce at first some geometric characterizations of the exterior part of inclusions. The set $\O'$ below is a connected subset of $\O\setminus (D\cup D')$ in the context of this paper.}

Let $\O' \subset \R^2$ be a bounded, connected and nonempty open domain of Lipschitz class. We define a subset $G_r$ of $\O'$ for $r\in\mathbb{R}_+$ as follows
\[
G_r:=\{x\in\O'|\text{ dist}(x,\p\O')>r\}.
\]
It is assumed that there exists $r_m\in\mathbb{R}_+$ such that $G_r$ is nonempty and connected for all $0<r \leq r_m$. It is also assumed that the periphery of $\O'$, denoted by $|\p \O'|$, is finite.

\begin{proposition}\label{propagation}
Let $w\in H^1_{loc}(\O')$ be a harmonic function in $\O'$. Suppose that $w\in L^\infty_{loc}(\O')$ with an $L^\infty$-norm in $G_r$ satisfying $\Vert w \Vert_{L^\infty(G_r)}=T_r\geq 1$. Assume that there exist $B_{r_0}(x_0)\subset \O'$ and $0<\epsilon<1$ such that
\[\Vert w \Vert_{L^\infty(B_{r_0}(x_0))} \leq \epsilon.\]
Then for $r<\frac{1}{5}\min(r_0,r_m)$ there holds
\begin{equation}
\Vert w \Vert_{L^\infty(\overline{G_{5r}})} \leq T_r \epsilon^{\tilde{\a}^{|\p \O'|/r+1}}. 
\end{equation}
\end{proposition}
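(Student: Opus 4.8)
The plan is to propagate the smallness from the single ball $B_{r_0}(x_0)$ to the entire compactly-contained region $\overline{G_{5r}}$ by covering it with a connected family of small balls and chaining the three-sphere inequality through Lemma~\ref{chain3spheres}. The key point is that the hypotheses of Proposition~\ref{propagation} are exactly tailored so that Lemma~\ref{chain3spheres} applies along any rectifiable curve inside $\O'$, provided the curve together with its $4r$-neighborhood stays inside $\O'$.

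First I would fix an arbitrary target point $y\in\overline{G_{5r}}$ and construct a rectifiable curve $\gamma\subset\O'$ joining $x_0$ to $y$. Here the definition of $G_r$ is used crucially: since $G_r$ is assumed to be connected for all $0<r\le r_m$, and since $y\in G_{5r}\subset G_r$ while $x_0$ can be taken so that $B_{r_0}(x_0)$ (hence $x_0$) lies in $G_r$ as well, both endpoints lie in the connected set $G_r$. I would therefore choose $\gamma$ to lie inside $G_r$, which guarantees that every point of $\gamma$ has distance greater than $r$ to $\p\O'$. In order to invoke Lemma~\ref{chain3spheres} I need the stronger fact that $B(\gamma,4r)\subset\O'$; this is why the restriction $r<\tfrac15\min(r_0,r_m)$ appears. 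By routing $\gamma$ through $G_{5r}$ (again using connectedness of $G_{5r}$), every point on $\gamma$ has distance at least $5r>4r$ to $\p\O'$, so the $4r$-tube around $\gamma$ is safely contained in $\O'$.

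Next I would normalize and apply the chaining lemma. Set $T:=T_r=\Vert w\Vert_{L^\infty(G_r)}\ge 1$, which dominates $\Vert w\Vert_{L^\infty(U)}$ for the relevant subdomain $U$ one takes as the $4r$-tube around $\gamma$ (or one simply applies the lemma on $G_r$ after checking the tube condition). Since $\Vert w\Vert_{L^\infty(B_{r_0}(x_0))}\le\epsilon$ and $r<r_0$, we also have $\Vert w\Vert_{L^\infty(B_{4r}(x_0))}\le\Vert w\Vert_{L^\infty(B_{r_0}(x_0))}\le\epsilon\le 1$, so the hypothesis $\Vert w\Vert_{L^\infty(B_{4r}(x))}\le 1$ of Lemma~\ref{chain3spheres} holds with $x=x_0$. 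Lemma~\ref{chain3spheres} then yields
\[
\Vert w\Vert_{L^\infty(B_r(y))}\le T\,\Vert w\Vert^{\tilde{\a}^{d_\gamma/r+1}}_{L^\infty(B_r(x_0))}\le T\,\epsilon^{\tilde{\a}^{d_\gamma/r+1}}.
\]
The final step is to bound the curve length $d_\gamma$ uniformly by the periphery $|\p\O'|$: since $\gamma$ runs inside $G_{5r}$ which shadows $\p\O'$, one can choose $\gamma$ with $d_\gamma\le|\p\O'|$ (up to the finitely many edges of $\O'$), and because $\tilde{\a}\in(0,1)$ the exponent is monotone, giving $\epsilon^{\tilde{\a}^{d_\gamma/r+1}}\le\epsilon^{\tilde{\a}^{|\p\O'|/r+1}}$. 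Taking the supremum over $y\in\overline{G_{5r}}$ completes the proof.

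The main obstacle I anticipate is the geometric/topological bookkeeping of the second paragraph: choosing the connecting curve $\gamma$ so that simultaneously (i) it lies in a region where $w$ is controlled, (ii) its $4r$-tube stays inside $\O'$, and (iii) its length admits the clean bound $d_\gamma\le|\p\O'|$ uniformly in $r$ and in the target point $y$. The connectedness assumption on $G_r$ and the finiteness of $|\p\O'|$ are precisely what make this possible, but turning ``$G_{5r}$ is connected and shadows the boundary'' into an explicit length estimate requires care, since a curve hugging a non-convex Lipschitz boundary could a priori be longer than $|\p\O'|$; one handles this by taking $\gamma$ to follow the boundary at the fixed offset $5r$, whose length is controlled by $|\p\O'|$ for $r$ small. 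Everything downstream (the normalization and the chaining estimate) is then a direct application of the already-established Lemma~\ref{chain3spheres}.
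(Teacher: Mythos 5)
Your proposal is correct and follows essentially the same route as the paper: connect $x_0$ to $y$ by a rectifiable curve inside $G_{5r}$ (so that its $4r$-tube lies in $G_r\subset\O'$), apply Lemma~\ref{chain3spheres} with $T=T_r$, and bound the curve length by $|\p\O'|$. The length estimate $d_\gamma\le|\p\O'|$ that you flag as the delicate point is simply asserted without further justification in the paper's own proof.
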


\begin{proof}
As $r<r_0/5$, it is immediately seen that $x_0\in G_{5r}$. For any point $y \in G_{5r}$, as $G_{5r}$ is connected, there exists a rectifiable curve $\gamma \subset G_{5r}$ that links  $x_0$ to $y$. Moreover, $B(\gamma,4r) \subset B(G_{5r},4r)\subset G_r$. By applying Lemma \ref{chain3spheres}, the proposition readily follows from using the fact that there always exists a curve with a length at most $|\p\O'|$ which link any pair of points in $\O'$.

The proof is complete. 
\end{proof}
{ In the following proposition, we give an estimation of $u-u'$ and $\n (u-u')$ near a convex corner of $D$ or $D'$. We assume there exists a convex Lipschitz domain $Q$ such that $\p \O'= \p Q'\cup (\p \O' \setminus \p Q')$ and $\mathrm{dist}(\p Q', \p \O'\setminus \p Q') > \d_0$.}

\begin{proposition}\label{propagation_small}
Let $\O'$, $Q'$ be defined as above. Let $w \in H^1_{loc}(\O')$ be a harmonic function in $\O'$. Let $x_c \in \p Q'$, $P\in \N$, $0<\a<1$, we assume that the function $\tilde{w}_P: x \mapsto |x-x_c|^P w(x)$ is of class $\mathcal{C}^\a$ in $\overline{\O'}$ with a norm at most $T\geq 1$. We assume there exist $B_{r_0}(x_0)\subset \O'$ and $0<\epsilon<1$ such that
\[\Vert w \Vert_{L^\infty(B_{r_0}(x_0))} \leq \epsilon.\]
If 
\begin{equation}\label{critere_epsilon_h}
\epsilon <\epsilon_m=\left[\exp\exp \left( \frac{5|\p \O'||\ln \tilde{\a}|}{(1-\a)\min(r_0,r_m,\d_0)}\right) \right]^{-1},
\end{equation}
then there exists a constant $C_0>0$ depending only on $\a$, $\tilde{a}$, $P$ and $|\p\O'|$ such that,
\begin{equation}\label{estimation_exterieur}
|w(x)|\leq \frac{C_0T}{\mathrm{dist}(x,\p Q')^P}(\ln|\ln \epsilon|)^{-\a},
\end{equation}
for all $x \in\O'\cap B_{\d_0}(x_c)$.
\end{proposition}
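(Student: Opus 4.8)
The plan is to combine the interior propagation of smallness from Proposition~\ref{propagation} with the weighted Hölder regularity of $w$ encoded in the hypothesis on $\tilde w_P$, and to balance the two resulting error mechanisms through a single, $\epsilon$-dependent choice of the radius $r$. First I would record the pointwise consequence of the $\mathcal{C}^\a$ hypothesis: since $\|\tilde w_P\|_{L^\infty(\overline{\O'})}\le T$, one has $|w(x)|\le T/|x-x_c|^P$ for every $x\in\O'$. Because $x_c\in\p Q'\subset\p\O'$, every $y\in G_r$ satisfies $|y-x_c|\ge\mathrm{dist}(y,\p\O')>r$, whence the quantity in Proposition~\ref{propagation} obeys $T_r=\|w\|_{L^\infty(G_r)}\le T/r^P$. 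Feeding this into Proposition~\ref{propagation} gives, for every admissible $r<\tfrac15\min(r_0,r_m)$,
\[
\|w\|_{L^\infty(\overline{G_{5r}})}\le \frac{T}{r^P}\,\epsilon^{\tilde{\a}^{|\p\O'|/r+1}}.
\]

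Next I would fix the radius
\[
r=\frac{|\p\O'|\,|\ln\tilde{\a}|}{(1-\a)\,\ln|\ln\epsilon|}.
\]
A direct computation then gives $\tilde{\a}^{|\p\O'|/r}=|\ln\epsilon|^{-(1-\a)}$, so that $\epsilon^{\tilde{\a}^{|\p\O'|/r+1}}=\exp(-\tilde{\a}\,|\ln\epsilon|^{\a})$ is super-exponentially small in $|\ln\epsilon|$, while $1/r^P$ grows only polynomially in $\ln|\ln\epsilon|$; the product is therefore negligible. The condition $\epsilon<\epsilon_m$ in \eqref{critere_epsilon_h} is exactly what forces $r<\tfrac15\min(r_0,r_m,\d_0)$, which simultaneously legitimizes the use of Proposition~\ref{propagation} and leaves room for the geometric construction below.

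It remains to reach points $x\in\O'\cap B_{\d_0}(x_c)$ lying within $5r$ of $\p Q'$, where propagation alone is silent. Here I would exploit the convexity of $Q'$: letting $p\in\p Q'$ be the nearest boundary point to $x$ and $n$ the outward unit normal there, the point $\hat x=x+5r\,n$ has the same metric projection $p$ onto the convex set, so $\mathrm{dist}(\hat x,\p Q')=d+5r\ge 5r$ with $d=\mathrm{dist}(x,\p Q')$; since $\mathrm{dist}(\p Q',\p\O'\setminus\p Q')>\d_0$ and $r$ is small, $\hat x\in\overline{G_{5r}}$ and $|x-\hat x|=5r$. The $\mathcal{C}^\a$ bound then yields
\[
|\tilde w_P(x)|\le|\tilde w_P(\hat x)|+T|x-\hat x|^\a\le |\hat x-x_c|^P\,\|w\|_{L^\infty(\overline{G_{5r}})}+T(5r)^\a .
\]
Dividing by $|x-x_c|^P\ge d^P$ and inserting the chosen $r$, the first term is controlled by the super-exponentially small propagation factor $\exp(-\tilde{\a}|\ln\epsilon|^\a)$, while the second produces
\[
\frac{T(5r)^\a}{d^P}=\frac{5^\a\big(|\p\O'|\,|\ln\tilde{\a}|/(1-\a)\big)^\a\,T}{d^P}\,(\ln|\ln\epsilon|)^{-\a},
\]
which is precisely \eqref{estimation_exterieur} with $C_0$ depending only on $\a,\tilde{\a},P$ and $|\p\O'|$. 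Points with $d\ge 5r$ already lie in $\overline{G_{5r}}$ and are handled directly by the displayed propagation bound.

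The main obstacle concerns the points adjacent to the corner: the harmonic function $w$ itself carries no smallness near $\p Q'$ and may even blow up there, so the propagation estimate is useless on its own. The crucial device is to transfer smallness from the interior set $\overline{G_{5r}}$ to the boundary strip through the Hölder continuity of the \emph{weighted} function $\tilde w_P$, and then to calibrate $r\sim 1/\ln|\ln\epsilon|$ so that the unavoidable Hölder increment $r^\a$ matches the target rate $(\ln|\ln\epsilon|)^{-\a}$ while the propagation error stays far below it. Achieving this balance, together with the convex-projection construction of $\hat x$ that keeps the comparison point inside $\overline{G_{5r}}$, is the delicate part; the remaining constant bookkeeping is routine.
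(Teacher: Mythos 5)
Your argument is correct and follows essentially the same route as the paper's proof: the same two-case split on $\mathrm{dist}(x,\p Q')$ versus a radius of order $|\p\O'|\,|\ln\tilde{\a}|/((1-\a)\ln|\ln\epsilon|)$, the same use of Proposition~\ref{propagation} at an auxiliary interior point followed by a transfer to $x$ via the H\"older continuity of the weighted function $\tilde{w}_P$, and the same balancing that yields the $(\ln|\ln\epsilon|)^{-\a}$ rate and the constant $C_0$. The only differences are cosmetic: you place the auxiliary point along the outward ray from $x$ rather than near the foot point $y\in\p Q'$ as the paper does, and your $r$ is the paper's $\tilde{r}/5$.
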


\begin{proof}
Let $x\in \O'\cap B_{\d_0}(x_c)$ and we denote $r=\mathrm{dist}(x,\p Q')$. 

Define
\[\tilde{r}=\tilde{r}(\epsilon)=\frac{5|\p\O'||\ln \tilde{\a}|}{(1-\a)\ln|\ln \epsilon|}>0.\]
The upper bound (\ref{critere_epsilon_h}) implies $\tilde{r} < \min(r_0,r_m,\d_0)$.\par

We assume that $r \leq \tilde{r}$ now, then there exists $y\in \p Q'$ such that $|x-y|\leq \tilde{r}<\d_0$. By the convexity of $Q'$, there exist $x'\in \R^2\setminus Q'$ such that $\mathrm{dist}(x',\p Q')=|x'-y|=\tilde{r}< \d_0$. The condition $\mathrm{dist}(\p Q', \p \O'\setminus \p Q') > \d_0$ implies $x' \in \O'$. We have at the same time, $|x-x'|\leq |x-y|+|x'-y|\leq 2 \tilde{r}$. The conditions $\mathrm{dist}(x',\p Q')=\tilde{r}$ and $ \frac{\tilde{r}}{5}< \frac{1}{5}\min(r_0,r_m)$ guarantee that we can apply Proposition \ref{propagation}.\par

From the assumption on the function $\tilde{w}_P$, we have, $w\in L^\infty_{loc}(\O')$ and $\Vert w \Vert_{L^\infty(G_{\tilde{r}/5)})} =T_{\tilde{r}/5} \leq T (\frac{5}{\tilde{r}})^P$. Thus,
\begin{equation}\label{application_directe_propagation}
|w(x')|\leq T \left(\frac{5}{\tilde{r}}\right)^P\epsilon^{\tilde{\a}^{\frac{5|\p \O'|}{\tilde{r}}+1}}.
\end{equation}
By the H\"older continuity of the function $w_P$ and  by the facts $r=\mathrm{dist}(x,\p Q')\leq |x-x_c|$ and $ r < \tilde{r}$, it follows,
\begin{equation}\label{estimation_proche}
\begin{split}
  &|w(x)|\ds \leq \frac{1}{|x-x_c|^P}\left(\Vert w_P \Vert_{\mathcal{C}^\a(\overline{\O'})}|x-x'|^\a+|x'-x_c|^P|w(x')| \right) \\
 \ds \leq &\frac{1}{|x-x_c|^P}\left( T 2^\a \tilde{r}^\a +M_P (|x-x'|^P+|x-x_c|^P)T \left(\frac{5}{\tilde{r}}\right)^P\epsilon^{\tilde{\a}^{\frac{5|\p \O'|}{\tilde{r}}+1}} \right)  \\
 \ds \leq & \frac{T 2^\a \tilde{r}^\a}{|x-x_c|^P}+M_P T5^P\left(\frac{1}{\tilde{r}^P}+\frac{2^P}{|x-x_c|^P}\right)\epsilon^{\tilde{\a}^{\frac{5|\p \O'|}{\tilde{r}}+1}}  \\
\ds \leq & \frac{T}{r^P}\left( 2^\a\tilde{r}^\a+ M_P 5^P(1+2^P)\epsilon^{\tilde{\a}^{\frac{5|\p \O'|}{\tilde{r}}+1}}\right),
\end{split}
\end{equation}
where $M_P\geq 1$ is such that for every $a,b \geq 0$ one has $(a+b)^P\leq M_P(a^P+b^P)$.\par

The choice of $\tilde{r}(\epsilon)$ implies that
\begin{equation}\label{r_alpha}
\tilde{r}^\a=(\frac{5|\p\O'||\ln \tilde{\a}|}{(1-\a)})^\a (\ln|\ln \epsilon|)^{-\a}, \qquad \frac{5|\p \O'|}{\tilde{r}}=\frac{1-\a}{|\ln \tilde{a}|}\ln|\ln \epsilon|,
\end{equation}
and hence
\begin{equation}\label{epsilon_exponant}
\epsilon^{\tilde{\a}^{\frac{5|\p \O'|}{\tilde{r}}+1}}=e^{-|\ln \epsilon|\tilde{\a}^{\frac{5|\p \O'|}{\tilde{r}}+1}}=e^{-\tilde{\a}|\ln\epsilon|^{1-(1-\a)}}\leq \frac{1}{\tilde{\a}|\ln \epsilon|^\a}\leq \frac{1}{\tilde{\a}}(\ln|\ln\epsilon|)^{-\a}.
\end{equation}
Using (\ref{estimation_proche}), (\ref{r_alpha}) and (\ref{epsilon_exponant}) and setting
\begin{equation}\label{valeur_C}
C_0=\left( \frac{10|\p\O'||\ln \tilde{\a}|}{1-\a} \right)^\a+\frac{M_P5^P(1+2^P)}{\tilde{\a}},
\end{equation}
the claim follows in the case $r< \tilde{r}$.\par 
In the case $r\geq \tilde{r}$, it is sufficient to apply directly Proposition \ref{propagation} with $T_r=T(\frac{5}{r})^P$. Then using (\ref{epsilon_exponant}), we have 
\[
\epsilon^{\tilde{\a}^{\frac{5|\p \O'|}{r}+1}} \leq \epsilon^{\tilde{\a}^{\frac{5|\p \O'|}{\tilde{r}}+1}} \leq \frac{1}{\tilde{\a}}(\ln|\ln\epsilon|)^{-\a}.
\]
Therefore the claim follows.

The proof is complete. 
\end{proof}

\begin{corollary}\label{propagation_boundary}
Let $\O'$, $Q'$, $x_c$ and $w$ satisfy the same assumptions in Proposition \ref{propagation_small}. Let $\Gamma'_0 \subset \p \O'\setminus \p Q'$ be a nonempty open subset. We assume that 
\[\Vert w \Vert_{H^{1/2}(\Gamma'_0)}+\Vert \p_\nu w\Vert_{H^{-1/2}(\Gamma'_0)} \leq \epsilon.\]
Then, if $\epsilon<\epsilon_m(\Gamma'_0,|\p\O'|, r_m, \d_0, \tilde{\a},\a)$, we have
\begin{equation}
|w(x)|\leq \frac{C_0 T}{\mathrm{dist}(x,\p Q')^P}(\ln|\ln \epsilon|)^{-\a},
\end{equation}
for $x \in\O'\cap B_{\d_0}(x_c)$. Here the constant $C_0$ is given by (\ref{valeur_C}).
\end{corollary}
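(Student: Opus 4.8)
The plan is to deduce Corollary~\ref{propagation_boundary} directly from Proposition~\ref{propagation_small}. The only difference between the two statements is that the interior smallness hypothesis $\Vert w\Vert_{L^\infty(B_{r_0}(x_0))}\le\epsilon$ has been replaced by the smallness of the Cauchy data $\Vert w\Vert_{H^{1/2}(\Gamma_0')}+\Vert\p_\nu w\Vert_{H^{-1/2}(\Gamma_0')}\le\epsilon$ on the boundary piece $\Gamma_0'\subset\p\O'\setminus\p Q'$. Hence it suffices to recover interior smallness on some ball $B_{r_0}(x_0)\Subset\O'$ lying at a fixed distance from $\p Q'$, with a quantitative bound of Hölder type $\Vert w\Vert_{L^\infty(B_{r_0}(x_0))}\le C\,T^{1-\theta}\epsilon^{\theta}$ for some $\theta\in(0,1)$ and $C$ depending only on the a-priori data, and then to insert this bound into Proposition~\ref{propagation_small}.

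To carry out this first step I would follow the standard procedure for the Cauchy problem from \cite{Alessandrini}. Fixing a point $\bar x\in\Gamma_0'$, I flatten $\Gamma_0'$ near $\bar x$ and extend $w$ across it: solving an exterior Dirichlet problem in a half-ball on the far side of $\Gamma_0'$ with boundary data $w|_{\Gamma_0'}$ produces a harmonic $v$ with $\Vert v\Vert_{L^\infty}\lesssim\Vert w\Vert_{H^{1/2}(\Gamma_0')}\le\epsilon$ by the maximum principle and elliptic estimates. Gluing $W:=w$ on the $\O'$ side and $W:=v$ on the far side yields $W\in H^1(B_\rho(\bar x))$ with $\Delta W=f\,\delta_{\Gamma_0'}$, where $f=\p_\nu w-\p_\nu v$ satisfies $\Vert f\Vert_{H^{-1/2}}\lesssim\epsilon$. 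Subtracting the single-layer potential $S[f]$, which is $O(\epsilon)$ in $L^\infty$ on the balls we use, gives a genuinely harmonic function to which the three-sphere inequality \eqref{3spheres} applies. Taking the inner sphere on the far side, where $W=v$ is $O(\epsilon)$, the outer sphere controlled by the global bound $\Vert w\Vert_{L^\infty}\le T$, and chaining a fixed (bounded) number of spheres across $\Gamma_0'$ into $\O'$ as in Lemma~\ref{chain3spheres}, I obtain the Hölder-type interior bound $\Vert w\Vert_{L^\infty(B_{r_0}(x_0))}\le C\,T^{1-\theta}\epsilon^{\theta}$ with $\theta=\tilde\a^{N}$ for the fixed step count $N$.

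Inserting $\tilde\epsilon:=C\,T^{1-\theta}\epsilon^{\theta}$ as the interior-smallness parameter in Proposition~\ref{propagation_small} then finishes the proof. The key observation is that the double-logarithmic modulus is unaffected by this substitution: since $\ln|\ln\tilde\epsilon|=\ln\!\big(\theta|\ln\epsilon|+O(1)\big)=\ln|\ln\epsilon|+o(1)$, the bound $(\ln|\ln\tilde\epsilon|)^{-\a}$ coincides with $(\ln|\ln\epsilon|)^{-\a}$ up to a multiplicative constant absorbed into $C_0$, so the conclusion \eqref{estimation_exterieur} is reproduced verbatim. The smallness requirement \eqref{critere_epsilon_h} for $\tilde\epsilon$ then translates into a threshold $\epsilon<\epsilon_m(\Gamma_0',|\p\O'|,r_m,\d_0,\tilde\a,\a)$, which now depends on $\Gamma_0'$ through $\theta$ and $C$.

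The main obstacle is precisely the boundary-to-interior step, which is the ill-posed heart of the Cauchy problem: smallness of the Cauchy data does not propagate into the interior unconditionally, and the a-priori bound $T$ must be invoked. The delicate point is to guarantee that this propagation is Hölder in $\epsilon$ rather than merely logarithmic, for otherwise feeding it into Proposition~\ref{propagation_small} would produce a spurious third logarithm and break the stated estimate. This is achieved because the propagation is carried out across a fixed, non-shrinking distance from $\Gamma_0'$, so the number of three-sphere steps $N$ stays bounded and the exponent $\theta=\tilde\a^{N}$ stays positive; this is in sharp contrast with the corner estimate of Proposition~\ref{propagation_small}, where the vanishing distance to $\p Q'$ is what forces the double-logarithmic loss. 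The remaining technical care concerns the borderline $L^\infty$-control of $S[f]$ near $\Gamma_0'$ in two dimensions, which is handled by the mapping properties of layer potentials together with the interior regularity of $w$.
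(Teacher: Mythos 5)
Your proposal takes essentially the same route as the paper: both reduce the corollary to Proposition \ref{propagation_small} by propagating the smallness of the Cauchy data on $\Gamma_0'$ to an $L^\infty$ bound on a fixed interior ball $B_{r_0}(x_0)$ at positive distance from $\p Q'$ (the paper simply invokes Lemma 6.1 and Theorem 6.2 of \cite{Alessandrini} together with a Sobolev-type embedding, whereas you sketch the extension-across-the-boundary and three-spheres argument underlying those results). Your extra observation that an intermediate H\"older bound $\tilde\epsilon=C\,T^{1-\theta}\epsilon^{\theta}$ only perturbs $\ln|\ln\epsilon|$ by $o(1)$, and hence is harmless, is correct and in fact more careful than the paper's claimed linear bound $\Vert w\Vert_{H^1(\mathcal{V})}\leq c_1\epsilon$, which is stronger than what the ill-posed Cauchy problem generally provides.
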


\begin{proof}
It follows from Lemma 6.1 and Theorem 6.2 in \cite{Alessandrini}, there exists a neighborhood of a point $P\in \Gamma'_0$, denoted by $\mathcal{V}$, such that $\mathcal{V}$ depends only on the geometric characteristics of $\Gamma'_0$ and $\Vert w \Vert_{H^1(\mathcal{V})}\leq c_1 \epsilon $. We can therefore choose $B_{r_0}(x_0)\subset\mathcal{V}$ with $r_0$ depending only on the geometrical characteristics of $\p \O'$. Then using the Sobolev embedding $H^1 {\hookrightarrow} L^\infty$ in $\R^2$, we have $\Vert w \Vert_{L^\infty(B_{r_0}(x_0))}\leq c_2\epsilon$. Thus , with a suitable choice of $\epsilon_m$, the claim follows by a straightforward application of Proposition \ref{propagation_small}.

The proof is complete. 
\end{proof}
\begin{remark}
In fact, using interior elliptic regularity estimate, $w$ is real analytic in a neighborhood of $\Gamma_0'$.
\end{remark}

\section{An Integral identity and several critical estimates}\label{Integral}


\begin{lemma}\label{lemme_geo}
Let $D,D'\subset \R^2$ be two open bounded convex polygons. Let $Q$ be the convex hull of $D\cup D'$. If $x_c$ is a vertex of $D$ such that $\mathrm{dist}(x_c,D')=\h$, where $\h$ gives the Hausdorff distance,
\begin{equation}\label{eq:haus}
\h=d_\mathcal{H}(D, D'),
\end{equation}
then $x_c$ is a vertex of $Q$. If the angle of $D$ at $x_c$ is $a$, then the angle of $Q$ at $x_c$ is at most $(a+\pi)/2<\pi$.
\end{lemma}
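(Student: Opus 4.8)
The plan is to reduce everything to the local geometry at $x_c$ and to exploit that $\h=d_{\mathcal{H}}(D,D')$ is realized at $x_c$ in a \emph{two-sided} way. We may assume $\h>0$, since $\h=0$ forces $\overline D=\overline{D'}$ and the assertion is trivial. Let $p\in\overline{D'}$ be the nearest point of $\overline{D'}$ to $x_c$, so $|x_c-p|=\h$, and set $n=(x_c-p)/\h$. I would record two standard convex-geometry facts at the outset. First, the supporting hyperplane of $\overline{D'}$ at its nearest point gives $\langle y-x_c,n\rangle\le-\h$ for every $y\in D'$; after fixing coordinates with $x_c=0$ and $n=-e_1$, this reads $D'\subseteq\{y_1\ge\h\}$. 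Second, since $x_c$ maximises $x\mapsto\mathrm{dist}(x,D')$ over $D$ (because $\mathrm{dist}(x_c,D')=\h=d_{\mathcal{H}}(D,D')\ge\sup_{x\in D}\mathrm{dist}(x,D')$), moving from $x_c$ along either edge direction $u_1,u_2$ of $D$ cannot increase this distance; testing the lower bound $\mathrm{dist}(x_c+\epsilon u_i,D')\ge\h+\epsilon\langle u_i,n\rangle$ (which follows from the half-plane bound) against $\mathrm{dist}(\cdot,D')\le\h$ yields $\langle u_i,n\rangle\le0$. Hence $D\subseteq\{y_1\ge0\}$ and the two edge directions $u_1,u_2$ of $D$ at $x_c$ have angles $\theta_1>\theta_2$ lying in $[-\pi/2,\pi/2]$ with $\theta_1-\theta_2=a$.

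Next I would identify the angle of $Q$ at $x_c$ with the opening of the tangent cone $T$ of $Q$ at $x_c$, which equals the conical hull of the tangent cone $C$ of $D$ at $x_c$ (the cone spanned by $u_1,u_2$) together with all directions $\{t-x_c:t\in D'\}$. Because $D,D'\subseteq\{y_1\ge0\}$ and $x_c=0$, all these directions lie in the closed right half-plane, so the opening of $T$ is exactly the topmost direction minus the bottommost direction, these extremes being taken over the union of $[\theta_2,\theta_1]$ and the angles subtended by $D'$. Thus everything reduces to bounding how far above $\theta_1$ (resp.\ below $\theta_2$) a point of $D'$ can be seen from $x_c$.

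This last bound is the crux, and it is exactly where the reverse Hausdorff inequality enters. Fix $t\in D'$ seen from $x_c$ under angle $\beta$ at distance $\rho=|t|$, and suppose $\beta>\theta_1$ (otherwise it does not affect the top). Two facts constrain $t$: from $D'\subseteq\{y_1\ge\h\}$ I get $\rho\cos\beta\ge\h$, hence $\beta\in(-\pi/2,\pi/2)$ and $\rho\ge\h/\cos\beta$; and from $\sup_{x'\in D'}\mathrm{dist}(x',D)\le\h$, together with $D\subseteq C$, I get $\mathrm{dist}(t,C)\le\h$. Since $\beta>\theta_1$, a short computation shows the nearest point of $C$ to $t$ lies on the upper edge ray and $\mathrm{dist}(t,C)=\rho\sin(\beta-\theta_1)$ (the apex-projection alternative is excluded because $\rho\ge\h$). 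Combining, $\frac{\h}{\cos\beta}\sin(\beta-\theta_1)\le\rho\sin(\beta-\theta_1)\le\h$, i.e.\ $\sin(\beta-\theta_1)\le\cos\beta=\sin(\tfrac{\pi}{2}-\beta)$, which gives $\beta\le\tfrac{\pi}{4}+\tfrac{\theta_1}{2}$. A symmetric argument (reflecting $x_2\mapsto-x_2$) bounds the bottommost direction from below by $-\tfrac{\pi}{4}+\tfrac{\theta_2}{2}$.

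Putting these together, the opening of $T$ is at most
\[
\Big(\tfrac{\pi}{4}+\tfrac{\theta_1}{2}\Big)-\Big(-\tfrac{\pi}{4}+\tfrac{\theta_2}{2}\Big)=\tfrac{\pi}{2}+\tfrac{\theta_1-\theta_2}{2}=\tfrac{a+\pi}{2}<\pi .
\]
Since the tangent cone of $Q$ at $x_c$ has opening strictly less than $\pi$, the point $x_c$ is an extreme point of the convex polygon $Q$, hence a vertex of $Q$, and its interior angle is precisely this opening, which is at most $(a+\pi)/2$. I expect the delicate step to be the crux paragraph: correctly using the reverse Hausdorff bound to control the angular width of $D'$ as seen from $x_c$, and in particular verifying the distance-to-cone formula and ruling out the apex-projection case; the remaining steps are routine convex geometry.
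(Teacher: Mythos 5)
Your argument is correct, and it is worth noting that the paper itself does not prove this lemma at all: the stated ``proof'' is only a pointer to the appendix of \cite{Liu_stability_polygon_helmholtz}, so you have supplied a complete, self-contained argument where the text defers to an external reference. Your route is essentially the same elementary convex-geometry computation used there: put $x_c$ at the origin, use the projection of $x_c$ onto $\overline{D'}$ to get the supporting half-plane $D'\subseteq\{y_1\ge\h\}$, use the maximality of $\mathrm{dist}(\cdot,D')$ at $x_c$ to force the tangent cone of $D$ into $\{y_1\ge 0\}$, and then use the reverse inequality $\sup_{x'\in D'}\mathrm{dist}(x',D)\le\h$ to bound the angular width of $D'$ seen from $x_c$, the key inequality being $\sin(\beta-\theta_1)\le\cos\beta$, i.e.\ $\beta\le\pi/4+\theta_1/2$. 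I checked the crux: the identification of the tangent cone of $Q$ with the conical hull of the directions to $D\cup D'$ is legitimate because all these directions lie in $[-\pi/2,\pi/2]$, so the opening is indeed $\sup-\inf$ of the achieved angles; and the distance-to-cone formula $\mathrm{dist}(t,C)=\rho\sin(\beta-\theta_1)$ is the correct one in the relevant regime. The only place where your write-up is terser than it should be is the exclusion of the apex-projection case: ``because $\rho\ge\h$'' alone does not finish it, since $\rho=\h$ with $\mathrm{dist}(t,C)=\rho$ is not by itself contradictory. The full argument is that $\rho\ge\h/\cos\beta$ together with $\mathrm{dist}(t,C)=\rho\le\h$ forces $\cos\beta=1$, hence $\beta=0$, and then $\beta-\theta_1>\pi/2$ would give $\theta_1<-\pi/2$, contradicting $u_1\in\{y_1\ge 0\}$. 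With that one sentence added, the proof is complete and, in my view, a genuine improvement on the paper, which leaves the reader to chase the cited appendix.
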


\begin{proof}
See the appendix in \cite{Liu_stability_polygon_helmholtz}.
\end{proof}
We assume from now on that $D\neq D'$. Let $x_c\in \overline{D}$ be a vertex in Lemma \ref{lemme_geo}. Then there exists $h\in\mathbb{R}_+$ such that $B_{h}(x_c)\cap D'=\emptyset$. Let $Q$ be the convex hull of $D\cup D'$ and $\B$ be the open disk $B_h(x_c)$. We denote respectively by $\tilde{D}$ and by $\tilde{Q}$ the sectors $\B\cap D$ and $\B\cap Q$. Let $b$ signify the opening of the angle of $Q$ at $x_c$. Then one has $a_m \leq b \leq (a_M+\pi)/2<\pi$ by Lemma \ref{lemme_geo}. We choose the polar coordinate system such that $x_c$ is the origin point and $\tilde{Q}$ coincides with the following sector,
\[\tilde{Q}=\{(r,\theta)|0<r< h,-b/2 < \theta < b/2\}.\]
We next define the integral contours on which we derive the estimates; see Figure \ref{coin} for a schematic illustration. Let
\begin{eqnarray}
\ds \Gamma^\pm:= \p D \cap \B,\nonumber\\
\ds \p S^i_D:=\p \B \cap D \subset \p S^i_Q,\nonumber\\
\ds \p S^i_Q:=\{(r,\theta)| r=h,-\frac{\pi+b}{4}\leq \theta\leq \frac{\pi+b}{4}\},\nonumber
\end{eqnarray}
and $\p S^e$ is a circular arc passing through the following three points in the polar coordinate: $(h,\frac{\pi+b}{4})$, $(\frac{1}{\tau},\pi)$, $(h,-\frac{\pi+b}{4})$ for a $\tau>0$. The idea of construction is to construct the contour $\p S^e$ such that $(x-x_c)\cdot \hat{x}\geq -\frac{1}{\tau}$ and $\mathrm{dist}(x,\p Q)\geq \frac{1}{2\tau}$ for all $x \in \p S^e$.
\begin{figure}[!ht]
\includegraphics[scale=2.2]{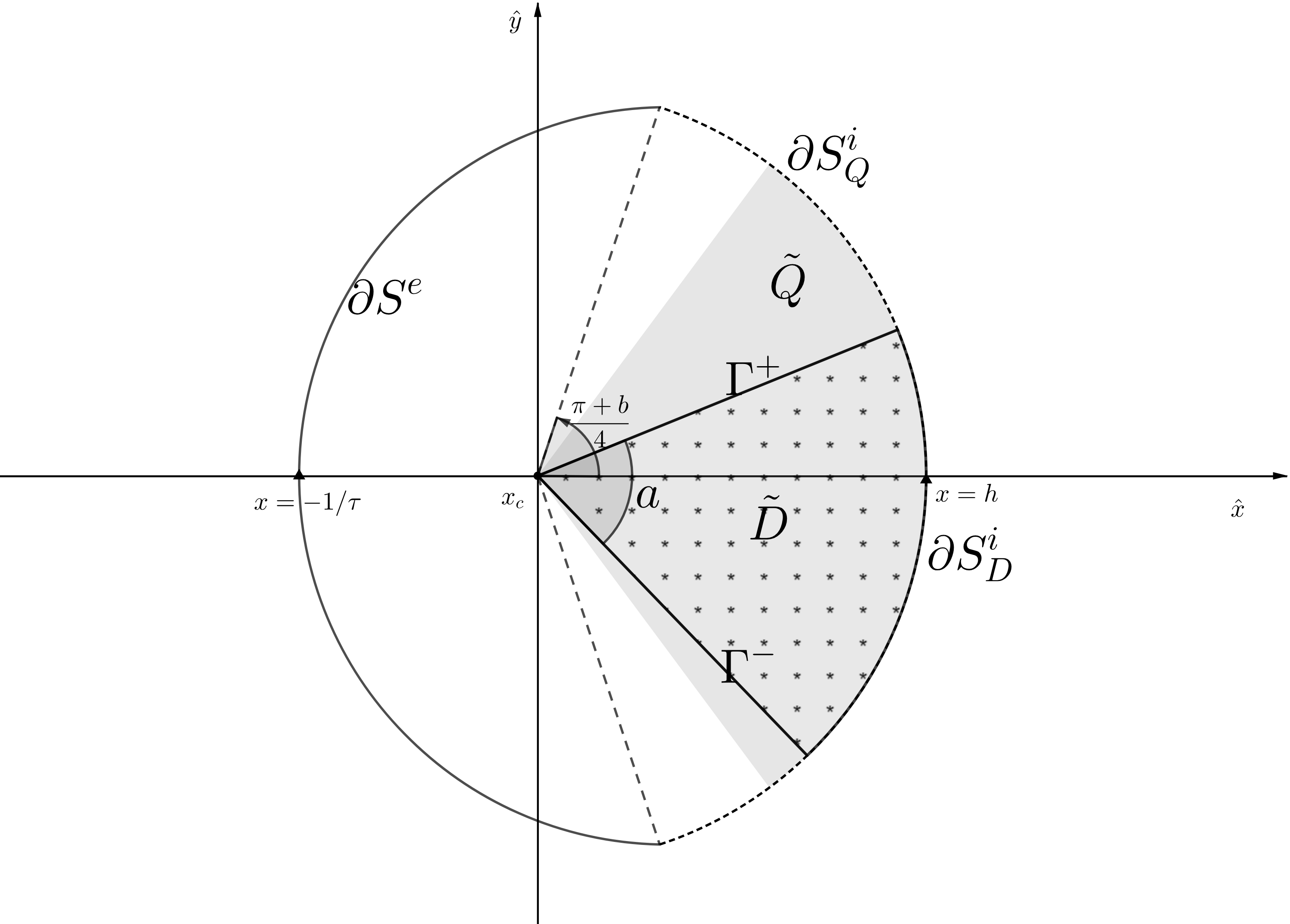}
\caption{The sectors $\tilde{D}$, $\tilde{Q}$ and the integral contours.}
\label{coin}
\end{figure}

\begin{proposition}
Let $u_0$ be a harmonic function in $\B$, then
\begin{equation}\label{IntId}
(k-1)\int_{\Gamma^\pm}u_0\p_\nu u d\sigma=\int_{\p S^i_Q \cup \p S^e} (u-u')\p_\nu u_0-u_0\p_\nu(u-u')  d\sigma,
\end{equation}
where the integral over $\Gamma^\pm$ is taken the values of $\p_\nu u$ in the interior of $\tilde{D}$.
\end{proposition}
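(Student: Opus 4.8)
The plan is to obtain \eqref{IntId} from a small number of applications of Green's second identity, exploiting that $u_0$ and $u'$ are \emph{both} harmonic on the relevant region while $u$ is only piecewise harmonic with a prescribed flux jump across $\Gamma^\pm$. First I would fix the bounded region $S\subset\B$ enclosed by the closed contour $\p S^i_Q\cup\p S^e$; by construction $x_c$ lies in $S$, the two edges $\Gamma^\pm=\p D\cap\B$ run through $S$, and (choosing $h$ small enough that only the two edges emanating from $x_c$ meet $\B$, so that $\p D\cap S=\Gamma^\pm$ exactly) they split $S$ into the sector $\tilde D=\B\cap D$ on one side and the exterior piece $S\setminus\overline{D}$ on the other. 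I would then record the three structural facts to be used: $u_0$ is harmonic on $\B\supset S$ by hypothesis; $u'$ is harmonic on $\B$ because $\B\cap D'=\emptyset$ forces the conductivity of the $(D',k')$-problem to equal $1$ throughout $\B$; and, by \eqref{EQ2}, $u$ is harmonic on $S\setminus\Gamma^\pm$ and continuous across $\Gamma^\pm$ with the transmission condition $\p_\nu u|^+=k\,\p_\nu u|^-$.

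Next I would apply Green's second identity to the pair $(u_0,u)$ separately on $\tilde D$ and on $S\setminus\overline{D}$; on each piece both functions are harmonic, so each boundary integral vanishes. Adding the two identities, the contributions along $\p S^i_D$ and along $(\p S^i_Q\setminus\p S^i_D)\cup\p S^e$ reassemble into a single integral over $\p S=\p S^i_Q\cup\p S^e$, while the two contributions along $\Gamma^\pm$ (carried with opposite outward normals) combine: the $u\,\p_\nu u_0$ terms cancel, since $u$ and $\p_\nu u_0$ are continuous across $\Gamma^\pm$, whereas the $u_0\,\p_\nu u$ terms leave $u_0$ times the flux jump, namely $u_0\,[\p_\nu u]=(k-1)\,u_0\,\p_\nu u|^-$, the interior trace being the one announced in the statement. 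This already expresses $(k-1)\int_{\Gamma^\pm}u_0\,\p_\nu u\,d\sigma$ as a boundary integral of $u\,\p_\nu u_0-u_0\,\p_\nu u$ over $\p S$ (up to the orientation of $\nu$ fixed in Figure~\ref{coin}). To replace $u$ by $u-u'$ on the contour, I would apply Green's identity once more, now to $(u_0,u')$ over all of $S$: both are harmonic there, so the contour integral of $u'\,\p_\nu u_0-u_0\,\p_\nu u'$ vanishes, and subtracting it inserts $(u-u')$ into the boundary terms, yielding \eqref{IntId}.

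The one genuinely delicate point — and the main obstacle — is that $\nabla u$ may be unbounded at the corner $x_c$, so Green's identity on $\tilde D$ and on $S\setminus\overline D$ is not immediately licensed there (note this issue affects only the two $u$-identities, since $u'$ and $u_0$ are harmonic across $x_c$). I would handle it by excising a small disk $B_\delta(x_c)$, applying Green's identity on the truncated regions $\tilde D\setminus B_\delta(x_c)$ and $(S\setminus\overline D)\setminus B_\delta(x_c)$, where $u$ is smooth up to the now corner-free boundary, and then letting $\delta\to0$. The extra contributions live on $\p B_\delta(x_c)$; since $u$ and $u_0$ are bounded and $u_0$ is smooth, while the corner asymptotics of the transmission problem — obtained from the decomposition of solutions in polygonal domains in \cite{Grisvard,Kozlov} — give $|\nabla u(x)|\leq C|x-x_c|^{\sigma-1}$ for a singular exponent $\sigma>0$ (compatible with $u\in H^1$), these circle integrals are $O(\delta^{\sigma})+O(\delta)\to0$. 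Hence the excised identities pass to the limit and reproduce the integrals over $\Gamma^\pm$ and $\p S$, completing the proof.
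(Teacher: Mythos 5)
Your argument is exactly the paper's proof, which is stated in one line (``follows directly from the jump relation and Green's formula''): Green's second identity for $(u_0,u)$ on $\tilde D$ and on $S\setminus\overline{D}$, the transmission conditions on $\Gamma^\pm$ to produce the factor $k-1$, and a final application to the harmonic pair $(u_0,u')$ to insert $u-u'$ on the outer contour. Your additional excision of $B_\delta(x_c)$ to license Green's formula despite the possible blow-up of $\nabla u$ at the vertex is a correct and welcome justification of a step the paper leaves implicit, and the only residual discrepancy is an overall sign fixed by the orientation of $\nu$, which the paper's text does not pin down either.
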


\begin{proof}
This proposition follows directly from the jump relation and Green's formula.
\end{proof}

We define here a special type of harmonic functions which is the so-called complex geometric optics (CGO) solutions. In this paper, we define the CGO solution as follows. Let $\tau>0$, we choose $\rho=\rho(\tau):=\tau(-\hat{x}+i\hat{y})\in \C^2$. For all $x\in \R^2$,
\begin{equation}\label{CGO}
u_0(x)=e^{\rho \cdot (x-x_c)}.
\end{equation}
It is easy to check that $\rho \cdot \rho=0$ and thus $u_0$ is harmonic in $\R^2$.

\begin{proposition}\label{Propo_Upper}
Let $\tau>0$, $u_0$ be a CGO solution defined by (\ref{CGO}). We assume the solution $u$ admits the following composition in a neighborhood of $x_c$,
\begin{equation}\label{decom_sing}
u=u_{sing}\zeta+u_{reg}\ \ \text{with }\ u_{sing}(r,\t)=K r^\eta \phi(\t),
\end{equation}
where $K>0$, $0<\eta_m\leq\eta \leq_M<1$, $\phi$ is a piecewise smooth function on $[0,2\pi]$ and $\zeta$ is a cut-off function satisfying
\[
\zeta(r)=\begin{cases}
1\quad\mbox{when}\ \ r\leq \varrho,\\
0\quad\mbox{when}\ \ r\geq 2\varrho,
\end{cases}
\] 
We assume here $\varrho \geq h$ and the function $u_{reg}$ has a $H^2$ regularity in $\O$. \\
Then there hold
\begin{eqnarray}\label{IntId2}
\ds \int_{\Gamma^\pm_\infty}  u_0 \p_\nu u_{sing} d\sigma &=\ds\int_{\Gamma^\pm_\infty \setminus \Gamma^\pm}u_0 \p_\nu u_{sing} d\sigma+\int_{\p S^i_D}u_0\p_\nu u_{reg} d\sigma -\int_{\tilde{D}}\n u_0\n u_{reg} dx  \nonumber \\
 &\ds+\frac{1}{k-1}\int_{\p S^i_Q \cup \p S^e} (u-u')\p_\nu u_0-u_0\p_\nu(u-u') d\sigma,
\end{eqnarray}
and furthermore the following estimate,
\begin{eqnarray}\label{UpperBonds}
&\ds C|\int_{\Gamma^\pm_\infty}u_0 \p_\nu u_{sing} d\sigma |\leq K\tau^{-\eta}e^{-\a'\tau h/2}+\tau^{-1}\Vert u_{reg}\Vert_{H^2(\O)}+h e^{-\a'\tau h}\Vert u_{reg}\Vert_{H^2(\O)}\nonumber\\
& +h e^{-\a' \tau h}(\Vert \p_\nu (u-u') \Vert_{L^\infty(\p S^i_Q)}+\tau\Vert u-u' \Vert_{L^\infty(\p S^i_Q)})\nonumber\\
&+h(\Vert \p_\nu (u-u') \Vert_{L^\infty(\p S^e)}+\tau\Vert u-u' \Vert_{L^\infty(\p S^e)}),
\end{eqnarray}
where $\a'=\cos(\frac{\pi+b}{4})>0$, $C$ depends only on the parameters $\eta_m,a_m,a_M$ and {$\Gamma^\pm_\infty$ signify the two rays from the origin and extending the segments $\Gamma^\pm$ to infinity.}
\end{proposition}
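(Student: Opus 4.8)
The plan is to read off \eqref{IntId2} from the previous integral identity \eqref{IntId}, and then bound the four resulting terms one at a time using the explicit exponential profile of the CGO solution $u_0$.

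First I would establish the identity \eqref{IntId2}. Since $\varrho\ge h$, the cut-off $\zeta$ is identically $1$ on $\B=B_h(x_c)$, so on the edge segments $\Gamma^\pm=\p D\cap\B$ one has $u=u_{sing}+u_{reg}$ and hence $\p_\nu u=\p_\nu u_{sing}+\p_\nu u_{reg}$. Splitting the left-hand side of \eqref{IntId} accordingly, the term $(k-1)\int_{\Gamma^\pm}u_0\,\p_\nu u_{reg}\,d\sigma$ must be rewritten. On $\tilde{D}=\B\cap D$ both $u_0$ and $u_{reg}$ are harmonic: $u_0$ by construction, and $u_{reg}=u-u_{sing}$ because $\zeta\equiv1$ forces $\Delta u_{reg}=\Delta u-\Delta u_{sing}=0$ there, the profile $Kr^\eta\phi(\t)$ in \eqref{decom_sing} being harmonic away from $x_c$. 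Green's first identity on $\tilde{D}$, whose boundary is $\Gamma^\pm\cup\p S^i_D$, then gives
\[
\int_{\Gamma^\pm}u_0\,\p_\nu u_{reg}\,d\sigma=\int_{\tilde{D}}\n u_0\cdot\n u_{reg}\,dx-\int_{\p S^i_D}u_0\,\p_\nu u_{reg}\,d\sigma.
\]
Finally I replace $\int_{\Gamma^\pm}u_0\,\p_\nu u_{sing}$ by $\int_{\Gamma^\pm_\infty}u_0\,\p_\nu u_{sing}$ minus the tail $\int_{\Gamma^\pm_\infty\setminus\Gamma^\pm}u_0\,\p_\nu u_{sing}$, substitute these two computations into \eqref{IntId} and divide by $k-1$; collecting terms yields \eqref{IntId2}.

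For the estimate \eqref{UpperBonds} I would take absolute values in \eqref{IntId2} and bound each piece through the pointwise data $|u_0(x)|=e^{-\tau(x-x_c)\cdot\hat x}=e^{-\tau r\cos\t}$ and $|\n u_0|=\sqrt2\,\tau\,e^{-\tau r\cos\t}$ (since $|\rho|=\sqrt2\,\tau$). The geometry supplies the decay: on $\Gamma^\pm_\infty$, $\p S^i_D$ and $\p S^i_Q$ the polar angle obeys $|\t|\le(\pi+b)/4$, so $\cos\t\ge\a'=\cos((\pi+b)/4)>0$ and $|u_0|\le e^{-\a'\tau r}$, whereas on $\p S^e$ the construction guarantees $(x-x_c)\cdot\hat x\ge-1/\tau$, hence $|u_0|\le e$. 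Concretely: (i) on $\Gamma^\pm_\infty\setminus\Gamma^\pm$ one has $\p_\nu u_{sing}=\pm Kr^{\eta-1}\phi'(\t_0)$ with $r\ge h$, and writing $e^{-\a'\tau r}\le e^{-\a'\tau h/2}e^{-\a'\tau r/2}$ together with $\int_0^\infty e^{-\a'\tau r/2}r^{\eta-1}\,dr=\Gamma(\eta)(\a'\tau/2)^{-\eta}$ produces $K\tau^{-\eta}e^{-\a'\tau h/2}$; (ii) on $\p S^i_D$ the factors $|u_0|\le e^{-\a'\tau h}$, arc length $\lesssim h$ and $\|\p_\nu u_{reg}\|_{L^\infty(\p S^i_D)}\lesssim\|u_{reg}\|_{H^2(\O)}$ (interior regularity, $\p S^i_D$ lying at radius $h$ away from $x_c$) give $h\,e^{-\a'\tau h}\|u_{reg}\|_{H^2(\O)}$; (iii) for $\int_{\tilde{D}}\n u_0\cdot\n u_{reg}$ a Hölder $L^1$--$L^\infty$ split with $\|\n u_0\|_{L^1(\tilde{D})}=\sqrt2\,\tau\int_{\tilde{D}}e^{-\tau r\cos\t}\,dx\lesssim\tau^{-1}$ gives $\tau^{-1}\|u_{reg}\|_{H^2(\O)}$; (iv) on $\p S^i_Q$, bounding $|u_0|\le e^{-\a'\tau h}$, $|\p_\nu u_0|\le\sqrt2\,\tau\,e^{-\a'\tau h}$ with length $\lesssim h$ gives the fourth term; (v) on $\p S^e$, using $|u_0|\le e$, $|\p_\nu u_0|\le\sqrt2\,e\,\tau$ and length $\lesssim h$ gives the last term. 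Summing and absorbing $1/(k-1)$, $\Gamma(\eta)$, $|\phi'|$ and the angular constants into $C=C(\eta_m,a_m,a_M)$ yields \eqref{UpperBonds}.

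The main obstacles are twofold. Analytically, the $\tau^{-1}$ in the $\int_{\tilde{D}}\n u_0\cdot\n u_{reg}$ term forces an $L^\infty$ control of $\n u_{reg}$, which is borderline in two dimensions since $H^2\not\hookrightarrow W^{1,\infty}$; the point is that the decomposition \eqref{decom_sing} extracts the entire leading corner singularity $r^\eta$ with $\eta<1$ into $u_{sing}$, so the remainder $u_{reg}$ carries only exponents $\ge1$ near $x_c$ and thus has bounded gradient controlled by $\|u_{reg}\|_{H^2(\O)}$. Geometrically, the delicate part is the angular bookkeeping that makes $\cos\t\ge\a'$ simultaneously on all the inner contours while keeping $u_0$ merely of order one -- not exponentially large -- on the outer arc $\p S^e$; this is precisely what the $-1/\tau$ offset in the design of $\p S^e$ buys, and it is what will later let the $\tau$-growth in the $\p S^i_Q$ and $\p S^e$ terms be defeated by the propagation-of-smallness bounds for $u-u'$.
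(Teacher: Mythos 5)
Your proposal is correct and follows essentially the same route as the paper: split $\p_\nu u$ on $\Gamma^\pm$ into singular and regular parts, use Green's identity on $\tilde D$ (after noting $u_{reg}$ is harmonic there) to trade the $\Gamma^\pm$ integral of $u_0\p_\nu u_{reg}$ for the $\p S^i_D$ and volume terms, and then bound each piece via the incomplete-Gamma tail estimate, the decay $|u_0|\le e^{-\a'\tau r}$ on the inner contours, and the $O(1)$ bound on $\p S^e$. Your remark on the borderline $H^2\not\hookrightarrow W^{1,\infty}$ issue for the $\int_{\tilde D}\n u_0\cdot\n u_{reg}$ term is if anything more careful than the paper, which invokes a Sobolev embedding $H^1\hookrightarrow L^\infty$ in $\R^2$ at that point.
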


\begin{proof}
It follows directly form (\ref{IntId}) and (\ref{decom_sing}),
\begin{eqnarray}
\ds \int_{\Gamma^\pm_\infty}  u_0 \p_\nu u_{sing} d\sigma &=\ds\int_{\Gamma^\pm_\infty \setminus \Gamma^\pm}u_0 \p_\nu u_{sing} d\sigma -\int_{\Gamma^\pm}u_0\p_\nu u_{reg} d\sigma \ds \nonumber \\
 &\ds+\frac{1}{k-1}\int_{\p S^i_Q \cup \p S^e} (u-u')\p_\nu u_0-u_0\p_\nu(u-u') d\sigma.
\end{eqnarray}
The equation (\ref{EQ}) implies that $u$ is harmonic in $\tilde{D}$. Then using the expression of $u_{sing}$, we have for $x=(r,\theta)\in \tilde{D}$,
\[\triangle u_{reg}(x)=-\triangle u_{sing}(x)=-Kr^{\eta-2}(\phi''(\theta)+\eta^2\phi(\theta)).\]
Using the fact $u_{reg}\in H^2(\O)$, it comes out $\phi''(\theta)+\eta^2\phi(\theta)=0$, which implies $\triangle u_{reg}=0$ in $\B \setminus \Gamma^\pm$ and $\phi(\theta)=K \cos(\eta \theta+\Phi)$. Then it follows by Green's formula,
\[\int_{\Gamma^\pm}u_0\p_\nu u_{reg} d\sigma=-\int_{\p S^i_D}u_0\p_\nu u_{reg} d\sigma +\int_{\tilde{D}}\n u_0\n u_{reg} dx.\]
Hence the integral identity (\ref{IntId2}) follows.

We are now about to estimate each terms in the right hand side of (\ref{IntId2}). We begin with introducing here the incomplete Gamma function $\Gamma(s,x):\R_+\times\R_+\rightarrow\R_+$, which is defined as
\[
\Gamma(s,x):=\int_x^{+\infty} t^{s-1}e^{-t} dt.
\]
Using the fact that $e^{-t}\leq e^{-x/2}e^{-t/2}$ for all $t\geq x$ and a single change of variable $t'=t/2$, we have the estimation 
\[
\Gamma(s,x)\leq 2^s\Gamma(s)e^{-x/2},
\] 
where $\Gamma(s)$ is the value on $s$ of the complete Gamma function.

Next we give the estimations corresponding to each integrals in the right hand side of (\ref{IntId2}). By the construction of the CGO solution, for all $x\in\tilde{D}\cup \Gamma^\pm \cup \p S^i_Q$,
\begin{equation}\label{control_u0}
|u_0(x)|\leq e^{\Re(\rho)\cdot(x-x_c)}\leq e^{-\a'\tau r}.
\end{equation}
On the other hand, the construction of the contour $\p S^e$ implies, for all $x\in \p S^e$,
\begin{equation}\label{control_u_0_Se}
|u_0(x)|\leq e^{\Re(\rho)\cdot(x-x_c)} \leq e^{-\tau\cdot \frac{1}{\tau}} = e^{-1}.
\end{equation}
The estimate of the first integral is therefore straightforward,
\begin{eqnarray}
&\ds\left|\int_{\Gamma^\pm_\infty \setminus \Gamma^\pm}u_0 \p_\nu u_{sing} d\sigma \right | & \ds\leq 2K \int_h^{+\infty}r^{\eta-1}e^{-\a'\tau r}dr \nonumber\\
& \leq 2K a'^{-\eta}\tau^{-\eta}\Gamma(\eta,\a'\tau h) &\leq C_1 K\tau^{-\eta}e^{-\a'\tau h/2},
\end{eqnarray}
where $C_1$ depends only on the a-priori parameters $\eta_m,a_m,a_M$.

Since $u_{reg}\in H^2(\O)$, we clearly have that $\n u_{reg}\in H^1(\O)$. Then using (\ref{control_u0}) and the Sobolev embedding $H^1\underset{continue}{\hookrightarrow} L^{\infty}$ in $\R^2$, it follows
\[
\left|\int_{\p S^i}u_0\p_\nu u_{reg} d\sigma\right| \leq C_2 h e^{-\a'\tau h}\Vert u_{reg}\Vert_{H^2(\O)},
\]
and
\[
\left|\int_{\tilde{D}}\n u_0\n u_{reg} dx\right|\leq C_3 (h e^{-\a'\tau h}+\frac{1}{\tau})\Vert u_{reg}\Vert_{H^2(\O)},
\]
where $C_2,C_3$ depend only on the opening $a$ of the angle of $D$ at $x_c$.\par
Using (\ref{control_u0}), (\ref{control_u_0_Se}) and direct estimates on the integrals, we can obtain the estimates on $u-u'$.

The proof is complete. 
\end{proof}

\begin{proposition}\label{Propo_Lower}
Let $u_0$, $u_{sing}$ be functions defined respectively by (\ref{CGO}) and (\ref{decom_sing}) with $0<\eta<1$ and $\phi$ be piecewise smooth on $[0,2\pi]$. Then there holds
\begin{equation}\label{LowerBound}
\left|\int_{\Gamma^\pm_\infty}u_0 \p_\nu u_{sing} d\sigma \right|=K \Gamma(\eta)\left|\phi'(\theta^+)e^{i a\eta}-\phi'(\theta^-)\right|\tau^{-\eta}\geq K \Gamma(\eta)\sin(a\eta)\tau^{-\eta},
\end{equation}
where $\theta^\pm$ signify the arguments of the vectors along $\Gamma^\pm$.
\end{proposition}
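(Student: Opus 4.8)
The plan is to evaluate the left-hand integral \emph{exactly} by working in the polar coordinates centred at $x_c$ and reducing each of the two rays $\Gamma^\pm_\infty$ to a one-dimensional radial integral of Gamma type. First I would record that, with $\rho=\tau(-\hat x+i\hat y)$, one has $\rho\cdot(x-x_c)=-\tau r e^{-i\theta}$, so along the ray of argument $\theta=\theta^\pm$ the CGO solution is $u_0=e^{-\tau r e^{-i\theta^\pm}}$. Since each $\Gamma^\pm_\infty$ is radial, its unit normal points along $\pm\hat\theta$ (the two edges carrying opposite angular normals, the sign fixed by the interior-trace convention of the integral identity \eqref{IntId}), whence $\p_\nu u_{sing}=\pm\frac1r\p_\theta(Kr^\eta\phi(\theta))=\pm Kr^{\eta-1}\phi'(\theta^\pm)$. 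The arc-length element being $d\sigma=dr$, each ray contributes $\pm K\phi'(\theta^\pm)\int_0^\infty r^{\eta-1}e^{-\tau e^{-i\theta^\pm}r}\,dr$.

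The key step is the identity $\int_0^\infty r^{\eta-1}e^{-\lambda r}\,dr=\Gamma(\eta)\lambda^{-\eta}$ (principal branch), valid whenever $\Re\lambda>0$. Here $\lambda=\tau e^{-i\theta^\pm}$; the geometric normalisation $\tilde Q=\{|\theta|<b/2\}$ together with the convexity bound $b\le(a_M+\pi)/2<\pi$ of Lemma \ref{lemme_geo} forces $|\theta^\pm|\le b/2<\pi/2$, so $\Re\lambda=\tau\cos\theta^\pm>0$. This positivity is precisely what guarantees convergence of the radial integral at $r\to\infty$ (convergence at $r\to0$ using $\eta>0$) and what legitimises rotating the integration contour from the positive real axis onto the ray of argument $-\theta^\pm$, the arc at infinity vanishing. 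One then gets $(\tau e^{-i\theta^\pm})^{-\eta}=\tau^{-\eta}e^{i\eta\theta^\pm}$; summing the two rays, pulling out the unimodular factor $e^{i\eta\theta^-}$ and writing $a=\theta^+-\theta^-$ yields the claimed exact equality $|\int_{\Gamma^\pm_\infty}u_0\p_\nu u_{sing}\,d\sigma|=K\Gamma(\eta)\,|\phi'(\theta^+)e^{ia\eta}-\phi'(\theta^-)|\,\tau^{-\eta}$.

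For the lower bound I would invoke the decomposition analysis of Proposition \ref{Propo_Upper}: inside $\tilde D$ the profile satisfies $\phi''+\eta^2\phi=0$, hence $\phi(\theta)=A\cos(\eta\theta+\Phi)$ and $\phi'(\theta^\pm)=-\eta A\sin(\eta\theta^\pm+\Phi)$ are \emph{real}. Using $\sin\psi\,e^{i\psi}=\frac{i}{2}(1-e^{2i\psi})$ one finds, after multiplying by the unit modulus $e^{i\eta\theta^-}$, that $|\phi'(\theta^+)e^{ia\eta}-\phi'(\theta^-)|=\eta A\,\tfrac12|1-e^{2i\eta a}|=\eta A\sin(\eta a)$, where $\sin(\eta a)>0$ because $0<\eta a<\pi$; the stated bound $\ge K\Gamma(\eta)\sin(\eta a)\tau^{-\eta}$ then follows from the normalisation of the angular part built into $u_{sing}$. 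I expect the main obstacle to be the careful justification of the contour rotation and the branch of $\lambda^{-\eta}$ in the Gamma evaluation under the complex weight $e^{-i\theta^\pm}$: everything rests on the strict sign $\cos\theta^\pm>0$ delivered by $b<\pi$, and one must also certify that the two real traces $\phi'(\theta^\pm)$ cannot cancel in the modulus, i.e.\ that the angular eigenfunction is non-degenerate so that the factor $\sin(\eta a)$ genuinely survives.
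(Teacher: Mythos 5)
Your proposal follows essentially the same route as the paper: both reduce the two rays to radial integrals of the form $\int_0^\infty r^{\eta-1}e^{-\lambda r}\,dr=\Gamma(\eta)\lambda^{-\eta}$ with $\Re\lambda=\tau\cos\theta^\pm>0$, justify this by the same contour rotation, and then use $\phi(\theta)=\cos(\eta\theta+\Phi)$ (inherited from the analysis in Proposition \ref{Propo_Upper}) to evaluate the modulus as a multiple of $\sin(\eta a)$. Your write-up is correct and in fact slightly more careful than the paper's on two points it leaves implicit: the strict positivity $\cos\theta^\pm>0$ coming from $b<\pi$, and the residual normalisation factor $\eta A$ in front of $\sin(\eta a)$, which the paper silently absorbs.
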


\begin{proof}
Let $z_0\in\C$ with $\Re(z_0)<0$. We define the integral 
\[
I(z_0,\eta):=\int_0^{+\infty} r^{\eta-1}e^{z_0 r} dr.
\]
Using the change of variables $z=z_0 r$, we have
\[I(z_0,\eta)=z_0^{-\eta}\int_{z_0\R_+} z^{\eta-1}e^z dz.\]
To calculate the complex integral above, we choose a contour $\gamma=\gamma_{z_0}\cup\gamma_A \cup\gamma_- \cup \gamma_\epsilon$, where the four portions are defined in the following way for $0<\epsilon<A$,
\[
\begin{split}
\gamma_{z_0}:=& \{z_0 t | \epsilon\leq t\leq A \},\\
\gamma_A:=& \{Ae^{i\theta}|\arg(z_0)\leq\theta\leq\pi\},r\\
\gamma_-:=& [-A,-\epsilon],\\
\gamma_\epsilon:=& \{\epsilon e^{i\theta}|\arg(z_0)\leq\theta\leq\pi\}.
\end{split}
\]
The function $z\mapsto z^{\eta-1}e^z$ is holomorphic in the interior domain defined by the contour $\gamma$ because we can choose the determination to the power function as the real positive axis. As an immediate consequence,
\[\int_\gamma z^{\eta-1}e^z dz=0.\]
Using the fact $\Re(z_0)<0$, one can easily obtain the following estimations,
\begin{eqnarray}
|\int_{\gamma_\epsilon} z^{\eta-1}e^z dz|\leq C\epsilon^\eta \underset{\epsilon \rightarrow 0}{\longrightarrow} 0, \nonumber\\
|\int_{\gamma_A} z^{\eta-1}e^z dz|\leq CA^\eta e^{A\Re(z_0)} \underset{A \rightarrow +\infty}{\longrightarrow} 0. \nonumber
\end{eqnarray}
Therefore, one can show that there holds
\begin{equation}\label{eq:11}
\begin{split}
& \int_{z_0\R_+} z^{\eta-1}e^z dz\\
=& \lim_{\epsilon \rightarrow 0,A\rightarrow +\infty} \int_{\gamma_{z_0}} z^{\eta-1}e^z dz\\
=& \lim_{\epsilon \rightarrow 0,A\rightarrow +\infty} -\int_{\gamma_A \cup \gamma_- \cup \gamma_\epsilon} z^{\eta-1}e^z dz\\
=& -\int_{-\infty}^0 t^{\eta-1}e^t dt=(-1)^\eta\int_0^{+\infty} t^{\eta-1}e^t dt =(-1)^\eta \Gamma(\eta).
\end{split}
\end{equation}
Thus,
\begin{equation}
I(z_0,\eta)=(\frac{-1}{z_0})^\eta \Gamma(\eta).
\end{equation}
On the other hand, it follows from the construction of the CGO solutions that $u_0(r,\theta^\pm)=e^{Z_\pm r}$ with $Z_\pm=-\tau e^{i\theta^\mp}$. Hence there holds
\begin{equation}
\begin{split}
\int_{\Gamma^\pm_\infty}u_0 \p_\nu u_{sing} d\sigma=& \ds K \int_0^\infty r^{\eta-1} [\phi'(\theta^+) u_0(r,\theta^+)-\phi'(\theta^-) u_0(r,\theta^-)] dr \\
\ds = & K \left( \phi'(\theta^+)I(Z_+,\eta)-\phi'(\theta^-)I(Z_-,\eta)\right),
\end{split}
\end{equation}
which together with the use of Proposition \ref{Propo_Upper}, $\phi(\theta)=\cos(\eta\theta+\Phi)$, readily yields \eqref{LowerBound}. 

The proof is complete. 
\end{proof}

\section{Proof of Theorem~\ref{main-theorem}}\label{PROOF}

One of the key ingredients to derive the stability theorem is the local decomposition of solutions to transmission problems in a neighborhood of each polygonal vertex. This is known from the pioneering work of Grisvard \cite{Grisvard} and from Kozlov, Maz'ya, Rossemann \cite{Kozlov} that the solution to a boundary value problem $\triangle u=f$ in a domain with a polygonal corner admits a decomposition in the form of (\ref{decom_sing}). This result is extended to transmission problems, and we refer to Kellogg \cite{Kellogg1,Kellogg2}, Dauge and Nicaise \cite{dauge1989oblique,nicaise1990polygonal}. We also refer to Bonnetier and Zhang \cite{bonnetier_zhang} for the characterization of the exponent $\eta$ in terms of the local geometric shape. Those results can be summarised as the following theorem. 

\begin{theorem}[Local decomposition of solutions to transmission problems]\label{theo_decom_sol}
Let $u\in H^1(\O)$ be the solution to (\ref{EQ}) with $D$ a polygon. We denote by $\mathcal{S}_D$ the set of vertices of $D$. Here the variables $r$, $\theta$ are related to the polar coordinates in the neighborhood of each vertex. Then the following decomposition holds,
\begin{equation}
u=u_{reg}+\sum_{x_i\in \mathcal{S}_D} K_i r^{\eta_i}\phi_i(\theta)\zeta_i,
\end{equation}
with the following proprieties,
\begin{enumerate}
\item $u_{reg}\in H^2(\O)$ and satisfies the same elliptic equation $\mathrm{div}[(1+(k-1)\chi_D)\n u] =0$.
\item $\Vert u_{reg} \Vert_{H^2(\O)}\leq C \Vert g \Vert_{H^{-1/2}(\p\O)}$ with $C$ independent of $u$.
\item The coefficient $K_i$ depends linearly on the data $g$.
\item The exponent $\eta_i\in (0,1)$ depends only on the conductivity $k$ and the geometry of the vertex.
\item $\phi_i$ is a piecewise smooth function depending only on the conductivity $k$ and the geometry of the vertex.
\item $\zeta_i$ is a smooth cut-off function such that $\zeta_i(r)=1$ if $r\leq\varrho_i$ and $\zeta_i(r)=0$ if $r\geq 2\varrho_i$. The radius $\varrho_i$ is chosen such that the disks $(B_{2\varrho_i}(x_i))_{i\in\mathcal{S}_D}$ do not intersect each other.
\end{enumerate}
\end{theorem}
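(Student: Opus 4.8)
The plan is to treat the theorem as a consolidation of the classical corner-singularity theory for elliptic transmission problems, so the real work is to recast the conductivity equation into a form where that theory applies vertex by vertex, and then to verify the listed quantitative properties. First I would rewrite (\ref{EQ}) in the transmission form (\ref{EQ2}): $u$ is piecewise harmonic on $\O\setminus\p D$, continuous across $\p D$, with the conormal jump $\p_\nu u|^+ = k\,\p_\nu u|^-$ (this sign being exactly the continuity of $\g\,\p_\nu u$ for $\g=1+(k-1)\chi_D$). Since the only obstruction to global $H^2$-regularity of such a piecewise-harmonic function is the set of corners of the interface $\p D$, I would introduce a partition of unity subordinate to the disks $B_{2\varrho_i}(x_i)$, chosen pairwise disjoint so as to furnish the cut-offs $\zeta_i$ of property (6), reducing the analysis to a single vertex $x_i$; away from the vertices $u\in H^2$ by interior and boundary elliptic regularity.

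Near a fixed vertex, working in polar coordinates $(r,\t)$ centred at $x_i$, the leading-order behaviour is governed by the homogeneous model problem on an infinite wedge: two harmonic sectors meeting along the two interface rays, one of opening $a$ (inside $D$, conductivity $k$) and one of opening $2\pi-a$ (outside, conductivity $1$). Separating variables through the ansatz $r^{\eta}\phi(\t)$ and using $\triangle(r^{\eta}\phi)=r^{\eta-2}(\phi''+\eta^2\phi)$, harmonicity forces $\phi''+\eta^2\phi=0$ on each sector, so that $\phi$ is a cosine on each piece; imposing continuity of $\phi$ together with the weighted jump of $\phi'$ dictated by $k$ at the two rays yields a homogeneous linear system whose solvability is a transcendental equation in $\eta$. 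This is precisely the Mellin-symbol eigenvalue problem of \cite{Kozlov}, and its roots in the strip $0<\Re\eta<1$ produce the singular exponents $\eta_i$ and the angular profiles $\phi_i(\t)=\cos(\eta_i\t+\Phi_i)$, both depending only on $k$ and the local angle $a$, which gives properties (4) and (5). The general theory of \cite{Grisvard,Kozlov,Kellogg1,Kellogg2,dauge1989oblique,nicaise1990polygonal} then guarantees that subtracting $\sum_i K_i r^{\eta_i}\phi_i(\t)\zeta_i$, with the stress-intensity coefficients $K_i$ determined as continuous linear functionals of the data $g$, leaves a remainder $u_{reg}\in H^2(\O)$ satisfying the same equation; linearity of the construction in $g$ gives property (3), and the a-priori bound $\|u_{reg}\|_{H^2(\O)}\le C\|g\|_{H^{-1/2}(\p\O)}$ of property (2) follows from boundedness of the solution operator combined with continuity of the functionals $K_i$.

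The main obstacle is the localization of the admissible exponents, that is, verifying that the relevant roots of the transcendental equation all lie in $(0,1)$ and, crucially, that none sits at an integer, since an integer root would force a logarithmic factor $r^{\eta}\log r$ and destroy the clean form (\ref{decom_sing}). This is exactly where the geometric and contrast hypotheses enter: for a convex opening $a\in(a_m,a_M)\subset(0,\pi)$ and contrast $k\in(k_m,k_M)$ with $k\neq1$, one must show that the leading exponent is real, simple, and strictly interior to $(0,1)$, separated from the endpoints uniformly in the a-priori parameters, so as to secure the bounds $0<\eta_m\le\eta_i\le\eta_M<1$ used in Proposition~\ref{Propo_Upper}. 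I would obtain this by analysing the characteristic function directly, examining its behaviour as $\eta\to0^+$ and $\eta\to1^-$ and invoking the explicit characterization of $\eta$ in terms of the local geometry from \cite{bonnetier_zhang}; the remaining assertions then follow as routine consequences of the cited decomposition theorems.
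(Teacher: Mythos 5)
The paper gives no proof of this theorem at all: it is presented as a summary of the corner-singularity literature (\cite{Grisvard,Kozlov,Kellogg1,Kellogg2,dauge1989oblique,nicaise1990polygonal,bonnetier_zhang}), and your sketch is precisely the standard argument contained in those references --- localization by cut-offs at the vertices, separation of variables for the model wedge transmission problem leading to the transcendental characteristic equation for $\eta$, and subtraction of the singular functions with coefficients depending linearly on the data to leave an $H^2$ remainder. You also correctly isolate the one genuinely delicate point that the paper merely asserts, namely the uniform localization of the exponents in $(0,1)$ in terms of the a-priori parameters, so your proposal is correct and follows essentially the same (cited) route as the paper.
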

With this theorem, we have the following useful results that are to be used for proving the main theorem of this paper.
\begin{enumerate}
\item For each vertex of a polygon $D\in \mathcal{D}$, the corresponding exponent $\eta$ satisfies $0<\eta_m\leq \eta\leq \eta_M<1$, with $\eta_m$, $\eta_M$ depending only on $k,a_m,a_M$.
\item It is possible to choose the data $g$ in a suitable space such that the coefficient $K$ can be bounded in $[1/K_0,K_0]$ with $K_0>0$ depending only on the a-priori parameters.
\item The $H^2$ norm of $u_{reg}$ depends only on the a-priori parameters and $\Vert g \Vert_{H^{-1/2}(\p \O)}$.
\item We choose for each cut-off function $\varrho=l/5$. Hence, the disks $(B_{2\varrho_i}(x_i))_{i\in\mathcal{S}_D}$ do not intersect each other.
\end{enumerate}  

We are ready to present the proof of Theorem~\ref{main-theorem}. 

\begin{proof}[Proof of Theorem \ref{main-theorem}]
Let $D, D'$ be two convex polygonal inclusions from the family $\mathcal{D}$ and let $Q$ be the convex hull of $D\cup D'$. We define the domain $\O'$ in Section \ref{Propagation} by, $\O':=\O\setminus Q$. Then, using Lemma \ref{lemme_geo}, $\O'$ is of Lipshitz class and the geometrical characteristics of $\p \O'$ depend only on the a-priori parameters, $\p\O$, $a_m$, $a_M$, $l$ and $\delta_0$. As an apparent consequence, the quantities like $r_0$, $r_m$ introduced in Section \ref{Propagation} depend only on those a-priori parameters.\par

Let $x_c$ be a vertex of $\p D$ such that $\h=\mathrm{dist}(x,D')$. We define the integral contours in Section \ref{Integral} with a radius $h=\min(\h/2,l/5,\d_0)$ and we conserve all notations in Section \ref{Integral}. With this radius, we can derive the following proprieties.
\begin{enumerate}
\item For any point $x'\in D'$, its distance to the ball $\B$ is at least $h$.
\item (\ref{decom_sing}) holds for the solution $u$ to (\ref{EQ}) and $\zeta(x)=1$ for any $x \in \B$.
\item All integral contours are supposed in $\B$, and the construction of $\p S^e$ requires $\tau \geq \tau_0=1/[2h\sin(\frac{\pi-b}{4})]=C_{\tau_0}h^{-1}$.
\end{enumerate}
Next we estimate $u-u'$ and $\n(u-u')$ on the contours $\p S^e$ and $\p S^i_Q$.\par 

It follows from Theorem \ref{theo_decom_sol} and its direct consequence (\ref{decom_sing}) that $u-u'$ is at least of class $\mathcal{C}^{\eta_m}$ in each neighborhood of the vertex of $Q$ with a norm $T_0>0$ that depends only on the a-priori data. Using the same arguments we can derive that the function $x\mapsto |x-x_c|\n (u-u')$ is also of class at least $\mathcal{C}^{\eta_m}$ with a norm denoted by $T_1$, which depends only on the a-priori data. Let $T=\max(T_0,T_1)$. We can then apply Proposition \ref{propagation_small} and Corollary \ref{propagation_boundary} with $\a=\eta_m$ and $\e < \e_m$, which is given by (\ref{critere_epsilon_h}). Then for all $x\in \p S^e$,
\begin{equation}\label{estimate_u-u'_e}
|u-u'|(x)\leq C_0 T (\ln|\ln \e|)^{-\eta_m},
\end{equation}
where $C_0$ is given in (\ref{valeur_C}). In what follows, we denote by $\d(\e)$ the quantity $(\ln|\ln \e|)^{-\eta_m}$. From the construction of $\p S^e$, we have $\frac{1}{2\tau} > \mathrm{dist}(x,\p Q)$ and thus for all $x\in \p S^e$,
\begin{equation}\label{estimate_gra_u-u'_e}
|\n(u-u')|(x)\leq 2C_0T \tau\d(\e).
\end{equation}
On the other hand, it follows directly from the Sobolev embedding that 
\begin{equation}\label{estimate_u-u'_i}
\Vert u-u'\Vert_{L^\infty(\p S^i_Q)} \leq C \Vert u-u' \Vert_{H^1(\O)} \leq C \Vert g \Vert_{H^{-1/2}(\p \O)}=C_{\p S^i,\infty},
\end{equation}
where the constant $C_{\p S^i,\infty}$ depends only on the a-priori data. Using Theorem \ref{theo_decom_sol}, we can obtain that,
\begin{eqnarray}\label{estimate_gra_u-u'_i}
&\ds \Vert \n(u-u')\Vert_{L^\infty(\p S^i_Q)} \leq 2K_0 h^{\eta_m-1}+\Vert u_{reg}\Vert_{H^2(\O)}+\Vert u'_{reg}\Vert_{H^2(\O)}\nonumber \\
&\leq 2K_0 h^{\eta_m-1}+ C_{\p S^i,1},
\end{eqnarray}
where the constant $C_{\p S^i,1}$ depends only on the a-priori data.\par

Then we apply Propositions \ref{Propo_Upper} and \ref{Propo_Lower} with the estimations (\ref{estimate_u-u'_e}), (\ref{estimate_gra_u-u'_e}), (\ref{estimate_u-u'_i}), and (\ref{estimate_gra_u-u'_i}). We absorb into the left hand side all the constants depending only on the a-priori parameters. There exist a constant $C$ depending only on a-priori parameters such that
\begin{equation}
C\tau^{-\eta}\leq \tau^{-\eta} e^{-\a'\tau h/2}+\tau^{-1}+h e^{-\a' \tau h}+ h e^{-\a' \tau h}(h^{\eta_m-1}+1+\tau)+h\tau \d(\e).
\end{equation}
Using the facts that $h\leq 1$ and $\tau \geq 1$ and the inequalities $e^{-x}\leq x^{-1}$, $e^{-x}\leq x^{-2}$ for all $x>0$, we have
\begin{equation}\label{eq:ddd1}
\begin{split}
C & \leq e^{-\a'\tau h/2}+ \tau^{\eta-1}+ h\tau^\eta e^{-\a' \tau h}+(h^{\eta_m}\tau^\eta+h\tau^\eta+h\tau^{1+\eta})e^{-\a' \tau h}+h\tau^{\eta+1}\d(\e), \\
& \leq h^{-1}\tau^{-1}+\tau^{\eta-1}+ \tau^{\eta-1}+h^{\eta_m-1}\tau^{\eta-1}+\tau^{\eta-1}+h^{-1}\tau^{\eta-1}+h\tau^{\eta+1}\d(\e),\\
& \leq h^{-1}\tau^{\eta-1}+h\tau^{\eta+1}\d(\e).
\end{split}
\end{equation}
We next determine a minimum modulo constants of the right hand side of the inequality in \eqref{eq:ddd1}. Set $\tau=\tau_e$ with
\begin{equation}
\tau_e=h^{-1}\d(\e)^{-1/2}.
\end{equation}
It is straightforward to verify that for $\e$ smaller than a certain constant one has that if
\begin{equation}\label{check_tau}
\d(\e)^{-1/2}\geq C_{\tau_0},
\end{equation}
then 
\begin{equation}\label{check_tau2}
\tau_e \geq \tau_0.
\end{equation}
Solving for $h$, it gives
\[h\leq C (\ln|\ln\e|)^{\frac{\eta_m(\eta-1)}{2\eta}},\]
and thus,
\begin{equation}\label{final}
\min(\h/2,l/5,\d_0) \leq C (\ln|\ln\e|)^{\frac{\eta_M-1}{2}}.
\end{equation}
Hence, if $\e$ is small enough such that $\e < \e_m$ in (\ref{critere_epsilon_h}), that (\ref{check_tau}) holds and that the right hand side of (\ref{final}) is smaller than $\min(l/5,\d_0)$, we have
\[\h \leq C (\ln|\ln\e|)^{\frac{\eta_M-1}{2}}.\]
Therefore, the claim of this theorem readily follows.

The proof is complete. 
\end{proof}

\section{On the uniqueness results}\label{Auxillaire}

Clearly, Theorem~\ref{main-theorem} implies the uniqueness of the inverse inclusion problem for polygonal inclusions under one measurement; see Remark~\ref{rem:1}. In this section, we further extend this kind of uniqueness result into a more general case where the conductivity $k\in L^\infty(\O)$ is a certain piecewise constant function. Next, we introduce the class of piecewise-constant conductivity functions within \textit{nested polygonal geometry}; see Fig.~\ref{geo_nested} for a schematic illustration. 

\begin{definition}
For $j\in \N^*:=\mathbb{N}\cup\{+\infty\}$, let $D_j\Subset \subset \O$ be a convex polygon such that
\[
D_{j+1}\Subset D_j.
\]
There exists $j_D\in\mathbb{N}$ such that $D_j=\emptyset$ when $j>j_D$. 
A conductivity function $k\in L^\infty(\O)$ is said to be \textit{piecewise constant within nested polygonal geometry} if there are constants $k_j >0$ with $k_{j+1}\neq k_j$, $k_1\neq 1$ such that
\begin{equation}\label{eq:ppp1}
k(x)=\sum_{j=1}^{\infty}k_j \chi_{\Sigma_j}(x),
\end{equation}
where $\Sigma_j=D_j \setminus \overline{D_{j+1}}$.
\end{definition}
\begin{figure}[!ht]
\includegraphics[scale=0.33]{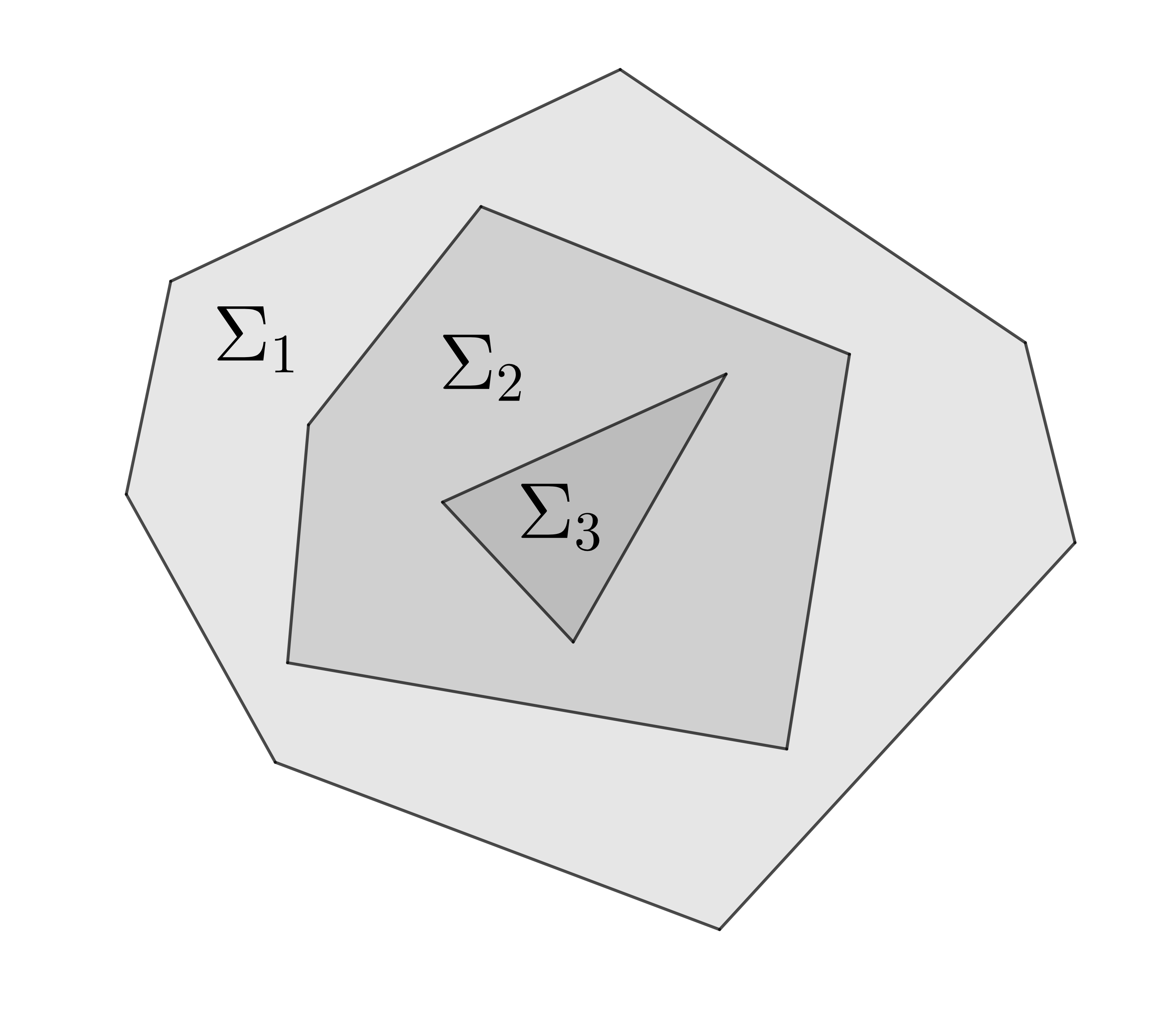}
\caption{Schematic illustration of a piecewise-constant conductivity function within nested polygonal geometry.}
\label{geo_nested}
\end{figure}

\begin{theorem}\label{unicite_geo_inclu}
Let $k,k' \in L^\infty(\O)$ be two piecewise-constant conductivity functions within nested polygonal geometry with the corresponding nested convex polygons being $(D_j)_{j\in\N^*}$, $(D'_j)_{j\in\N^*}$, respectively. Consider the conductivity problem \eqref{EQ} and let $u$ and $u'$ be the corresponding solutions associated with $k$ and $k'$, respectively. Suppose that the boundary input $g$ is admissible in the sense that \eqref{eq:condd1} is fulfilled for $u$ on each vertex of $D_j$ (respectively, $u'$ on each vertex of $D_j'$, $j\in\mathbb{N}^*$). 
Then if $u=u'$ on $\Gamma_0$, one must have $D_j=D'_j$ for all $j\in \N^*$ and $k=k'$.
\end{theorem}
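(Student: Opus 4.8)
The plan is to prove the nested uniqueness result by an inductive peeling argument that repeatedly invokes the single-inclusion uniqueness already contained in Theorem~\ref{main-theorem}. First I would establish the base case: the outermost polygons $D_1$ and $D_1'$ together with the first conductivity contrasts $k_1,k_1'$. The key observation is that the argument proving Theorem~\ref{main-theorem} is entirely local around a vertex lying on $D_1\Delta D_1'$, and near such a vertex the conductivity is constant (equal to $k_1$ on one side and $1$ on the other), so the local decomposition of Theorem~\ref{theo_decom_sol} applies verbatim. Since $u=u'$ on $\Gamma_0$, the propagation-of-smallness machinery (Proposition~\ref{propagation_small}, Corollary~\ref{propagation_boundary}) forces $u-u'$ and its gradient to vanish on the exterior contours, and the integral-identity/CGO lower bound (Propositions~\ref{Propo_Upper} and \ref{Propo_Lower}) then yields a contradiction with the admissibility condition \eqref{eq:condd1} unless no such separating vertex exists; hence $D_1=D_1'=:\widehat D_1$.

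Next I would show $k_1=k_1'$ once $D_1=D_1'$. With the outer boundaries coinciding, the functions $u$ and $u'$ solve the same transmission problem across $\partial\widehat D_1$ but with jump coefficients $k_1$ and $k_1'$. Because $u=u'$ on $\Gamma_0$ and both are harmonic in $\Omega\setminus\overline{D_1}$, unique continuation gives $u=u'$ throughout the exterior component $\Omega\setminus\overline{\widehat D_1}$, so in particular $\partial_\nu u|^+=\partial_\nu u'|^+$ and $u|^+=u'|^+$ coincide on $\partial\widehat D_1$. Feeding these identical exterior Cauchy data into the jump relation $\partial_\nu u|^+=k_1\partial_\nu u|^-$ (and the analogue for $u'$) and using the admissibility condition, which guarantees $\partial_\nu u|^-\not\equiv 0$ on at least a corner neighbourhood, forces $k_1=k_1'$. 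Consequently $u=u'$ in the annular region $\Sigma_1=D_1\setminus\overline{D_2}$ as well, which reduces the problem to the interior domain $D_2$ carrying the inner nested structure.

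I would then iterate. Having matched the outer $\ell$ layers, $D_j=D_j'$ and $k_j=k_j'$ for $j\le\ell$, I treat $D_{\ell+1}$ as an inclusion of contrast $k_{\ell+1}$ against the now-common background $k_\ell$ inside the known domain $D_\ell=D_\ell'$, with the matched solution $u=u'$ on $\partial D_\ell$ playing the role of the boundary datum. The local-decomposition and CGO arguments are again applicable since the conductivity is constant on either side of each vertex of $D_{\ell+1}\Delta D_{\ell+1}'$, and admissibility is assumed at every such vertex by hypothesis; this yields $D_{\ell+1}=D_{\ell+1}'$ and, as in the base case, $k_{\ell+1}=k_{\ell+1}'$. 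Since the geometry terminates at finite depth $j_D$ (with the convention $D_j=\emptyset$ for $j>j_D$), the induction closes after finitely many steps and gives $D_j=D_j'$ for all $j$ and $k=k'$.

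The main obstacle I anticipate is justifying the inductive step rigorously at the level of \emph{interfaces} rather than a single exterior boundary: after peeling several layers one must argue that matched Cauchy data propagate from the outer region across each already-identified interface into the next annulus, so that the ``single partial boundary measurement'' effectively becomes full Cauchy data on $\partial D_\ell$ for the reduced inner problem. This requires a careful unique-continuation statement across the polygonal interfaces (where only $H^1$ regularity holds and the gradient may be singular at corners), together with verifying that the contrast $k_\ell$ being common on both sides does not obstruct continuation of $u-u'$ through $\Sigma_\ell$; the corner singularities are exactly where the decomposition of Theorem~\ref{theo_decom_sol} must be used to control the $H^1$ behaviour and to certify that the CGO lower bound in Proposition~\ref{Propo_Lower} remains strictly positive under the admissibility condition \eqref{eq:condd1}.
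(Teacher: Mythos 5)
Your overall architecture --- induction on the layers, identifying $D_{\ell+1}=D'_{\ell+1}$ by running the local corner argument of Theorem~\ref{main-theorem} inside the already-matched polygon $D_\ell$, and using unique continuation to push the identity $u=u'$ (and hence matched Cauchy data) across each identified interface --- is exactly the paper's strategy, and the ``main obstacle'' you flag at the end is indeed resolved there by unique continuation in the annuli together with the decomposition of Theorem~\ref{theo_decom_sol}.

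However, your argument for the conductivity values $k_\ell=k'_\ell$ has a genuine gap. From $u|^+=u'|^+$ and $\partial_\nu u|^+=\partial_\nu u'|^+$ on $\partial\widehat D_1$, the jump relations give only $k_1\,\partial_\nu u|^- = k'_1\,\partial_\nu u'|^-$; you cannot cancel the interior normal derivatives because you do not yet know that $\partial_\nu u|^-=\partial_\nu u'|^-$ --- that is precisely what would follow \emph{after} knowing $k_1=k'_1$, so the step is circular. Concretely, if $k_1\neq k'_1$ the difference $w=u-u'$ is a harmonic function in the annulus $\Sigma_1$ with vanishing Dirichlet trace but nonvanishing Neumann trace $(1/k_1-1/k'_1)\partial_\nu u|^+$ on $\partial\widehat D_1$; this is a perfectly consistent Cauchy datum for a nontrivial harmonic function, so no contradiction arises from the jump relations and admissibility alone. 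The paper closes this step quantitatively: it derives a fresh integral identity at a vertex $x_c$ of $D_m$, namely \eqref{IntId_k}, in which the unknown discrepancy appears as the explicit prefactor $k_{m-1}\bigl(\tfrac{1}{k'_m}-\tfrac{1}{k_m}\bigr)$ multiplying the CGO-weighted corner integral $\int_{\Gamma^\pm}u_0\,\partial_\nu u\,d\sigma$. Proposition~\ref{Propo_Lower} (with admissibility guaranteeing $K>0$ in the singular decomposition) bounds that integral below by $c\,K\,\tau^{-\eta}$, while the right-hand side of \eqref{IntId_k} is $o(\tau^{-\eta})$ by the estimates of Proposition~\ref{Propo_Upper}, since the exterior Cauchy data of $u-u'$ already vanish on the relevant contours. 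Comparing the two rates in $\tau$ forces $\tfrac{1}{k'_m}=\tfrac{1}{k_m}$. Some such quantitative corner argument (or an argument matching the $k$-dependent singular exponents $\eta$ of Theorem~\ref{theo_decom_sol}) is unavoidable here; the purely algebraic use of the jump relation does not suffice.
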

\begin{proof}

We prove the theorem by induction. 

Set $k_0=k_0'=1$ and let $Q$ be the convex hull of $D_1\cup D'_1$. Clearly, one can construct a convex polygon $D_0$ such that $Q \Subset D_0 \Subset \O$. It is immediately seen from Theorem \ref{main-theorem} that $D_1=D'_1$. Using the unique continuation property, we have $u=u'$ in $\O\setminus D_1$ and thus $u=u'$, $\p_\nu u =\p_\nu u'$ on $\p D_0$. Then by setting $k_0=k'_0=1$ the case $m=1$ is true.\par

Suppose that for $j<m$ with $m\in\mathbb{N}$, there hold $D_j=D'_j$, $k_j=k'_j$, and $u=u'$, $\p_\nu u =\p_\nu u'$ on $\p D_j$. It follows from Theorem \ref{main-theorem} applied in the polygon $D_{m-1}$, and from the unique continuation property that $D_m=D'_m$ and $u=u'$, $\p_\nu u|_+=\p_\nu u'|_+$ on $\p D_m$. Next we prove $k_m=k'_m$.

Let $x_c\in \p D_m$ be a vertex of $D_m$. It follows from the Green formula and the jump relations that the following integral identity holds
\begin{equation}\label{IntId_k}
k_{m-1}(\frac{1}{k'_m}-\frac{1}{k_m})\int_{\Gamma^\pm}u_0\p_\nu u\ d\sigma=\int_{\p S^i_{D_m}} u_0 \p_\nu(u-u')-(u-u')\p_\nu u_0\ d\sigma, 
\end{equation}
for any harmonic function $u_0$ in a neighborhood of $x_c$. Here the value of $\p_\nu u$ is taken from the outside of $D_m$. Now we take $u_0$ to be the CGO solution constructed in (\ref{CGO}) and decompose $u$ in the left hand side using (\ref{decom_sing}). Then we can proceed the same analysis in Propositions \ref{Propo_Upper} and \ref{Propo_Lower}. It turns out there exists a constant $C\in\mathbb{R}_+$ independent of the choice of $u_0$ such that for all $\tau>0$,
\begin{equation}\label{eq:fff1}
C k_{m-1}\left|\frac{1}{k'_m}-\frac{1}{k_m}\right|\tau^{-\eta}\leq \tau^{-\eta}e^{-\a'\tau h/2}+\tau^{-1}+h e^{-\a' \tau h}+h (1+\tau)e^{-\a'\tau h}.
\end{equation}
Therefore, we can choose $\tau\in\mathbb{R}_+$ sufficiently small such that the left hand side is larger then the right hand side in \eqref{eq:fff1}. In doing so, one immediately has that 
\[k_m=k'_m.\]
Using the jump relations, we have $\p_\nu u|_-=\p_\nu u'|_-$ on $\p D_m$, which completes the induction. 

The proof is complete. 
\end{proof}

\section*{Acknowledgment}
The work was supported by the FRG and startup grants from Hong Kong Baptist University, Hong Kong RGC General Research Funds, 12302017 and 12301218.

\end{document}